\newtheorem*{thm*}{Theorem}
\newtheorem{thm}{Theorem}[section]
\newtheorem{prop}[thm]{Proposition}
\newtheorem{lem}[thm]{Lemma}
\newtheorem{cor}[thm]{Corollary}
\newtheorem{defi}[thm]{Definition}
\numberwithin{equation}{section}
\def\:{\colon}
\def\C{\mathbb{C}}
\def\R{\mathbb{R}}
\def\N{\mathbb{N}}
\def\Z{\mathbb{Z}}
\def\Q{\mathbb{Q}}
\def\H{\mathbb{H}}
\def\bC{\widehat{\mathbb{C}}}
\def\ga{\gamma} 
\def\Ga{\Gamma} 
\def\om{\omega} 
\def\Om{\Omega} 
\def\la{\lambda} 
\def\ra{\rightarrow} 
\def\sub{\subseteq} 
\newcommand{\im}{\operatorname{Im}}
\newcommand{\re}{\operatorname{Re}}
\newcommand{\inte}{\operatorname{inte}}
\newcommand{\PSL}{\operatorname{PSL}}
\begin{document}
\title{The quasi-periods of the Weierstrass zeta-function}  
\author{Mario Bonk}
\thanks{The author was partially supported by  NSF grant DMS-2054987.}
\address {Department of Mathematics, University of California, Los Angeles,
CA 90095}
\email{mbonk@math.ucla.edu}
\keywords{Weierstrass zeta-function, modular forms, hypergeometric differential equation, Schwarz triangle functions.}
\subjclass{Primary: 33E05, 33C75, 30C20, 11F11.} 
\date{November 26, 2024}
\begin{abstract} We study the ratio $p=\eta_1/\eta_2$ of the  
quasi-periods of 
the Weierstrass $\zeta$-function in dependence of the ratio 
$\tau=\om_1/\om_2$ of the gene\-rators of the underlying rank-2 lattice. We will give an explicit geometric description of the map 
$\tau\mapsto p(\tau)$. As a consequence, we obtain an  explanation of a theorem by Heins who showed that $p$ attains every value in the Riemann sphere infinitely often. Our main result is implicit in the classical literature, but it seems not to be very well known. 

Essentially, this is an expository paper.  We hope that it   is easily accessible and may serve  as an introduction to these  classical themes.  \end{abstract}

\maketitle

\section{Introduction}
Throughout this paper, we assume that $\om_1,\om_2\ne 0$ are two numbers in the complex plane $\C$ that are linearly independent over the field of real numbers $\R$.  We  define 
\begin{equation}\label{tau}
\tau\coloneqq \om_1/\om_2, 
\end{equation}
and  assume that 
\begin{equation}
\im(\tau)>0.
\end{equation}
We also consider the associated rank-$2$ lattice 
 \begin{equation}\label{latt} 
\Gamma=\{k\om_1+n\om_2: n,k\in \Z\}.
\end{equation}
generated by $\om_1$ and $\om_2$.
 
The {\em Weierstrass $\zeta$-function} associated with this lattice $\Gamma$  is  given by the series
\begin{equation}\label{zeta}
\zeta(u;\Ga)=\frac 1u+\sum_{\ga\in \Gamma\setminus\{0\}}
\biggl( \frac{1}{u-\ga}+\frac1\ga +\frac{u}{\ga^2}\biggr). 
\end{equation}
We  simply write $\zeta(u)$ if the underlying lattice $\Gamma$ is understood. It is well known that the series  \eqref{zeta} converges absolutely and locally uniformly  for $u\in \C\setminus \Gamma$.  Moreover, $\zeta$ is an odd meromorphic function with poles of first order precisely at the lattice points. It  has the periodicity property
\begin{equation}\label{defeta} 
\zeta(u+\om_k)=\zeta(u)+\eta_k\end{equation}
for some constants $\eta_k\in \C$, $k=1,2$ (for a discussion of all these facts, see  \cite[Chapter~4]{Cha}). Throughout this paper, we assume that $u$ represents a variable  in 
$\C$ and so \eqref{defeta} and similar formulas are identities valid for all $u\in \C$. 

We call $\eta_1$ and $\eta_2$ the {\em quasi-periods} of $\zeta$ associated with the given generators $\om_1$ and $\om_2$ of $\Gamma$.  Evaluating \eqref{defeta}  at $u=-\om_k/2$ and using that $\zeta$ is an odd function, we see that 
\begin{equation}
\eta_k= 2 \zeta(\tfrac12 \om_k)
\end{equation}
for $k=1,2$. 
It also follows from \eqref{defeta}  that  \begin{equation}
\zeta(u+k\om_1+n\om_2)=\zeta(u)+ k\eta_1+n\eta_2
 \end{equation}
for $k,n\in \Z$.

Since there are no doubly-periodic meromorphic functions on $\C$ with period lattice $\Gamma$ and only first-order poles at the lattice points (see \cite[Corollary, p.~23]{Cha}), $\eta_1$ and $\eta_2$ cannot both vanish. This means that 
\begin{equation}
p\coloneqq \eta_1/\eta_2\in \bC\coloneqq\C\cup\{\infty\}
\end{equation}
is well defined.  Moreover, it  follows from the formulas above that  $\eta_1$ and $\eta_2$ are homogeneous  of degree $-1$ when considered as functions of the pair $(\om_1,\om_2)$. This means that if we make the substitution
\begin{equation}
(\om_1,\om_2)\mapsto (t \om_1,t\om_2)
\end{equation}
with $t\in\C^*\coloneqq \C\setminus \{0\}$, then we get a corresponding substitution 
\begin{equation}
(\eta_1,\eta_2)\mapsto (t^{-1} \eta_1,t^{-1}\eta_2).
\end{equation}
This implies that the function $p$ is homogeneous of degree $0$ and hence we can consider  $p=p(\tau)$ as function of 
$\tau=\om_1/\om_2$. 

Let  $\H\coloneqq\{\tau\in \C: \im(\tau)>0\}$ be  the open upper halfplane. The function $\tau\in \H\mapsto p(\tau)\in \bC$ can be described explicitly, as the  
main result of this paper shows.  In order to state this, we first introduce some terminology.

A {\em closed Jordan region} $X$ in the Riemann sphere $\bC$ is a compact set homeo\-morphic to a closed disk. Then its boundary $\partial X$ is a Jordan curve 
and the set of interior points $\inte(X)\coloneqq X\setminus \partial X$ a simply connected open region. 

A {\em circular arc triangle} $T$ is a closed Jordan region in $\bC$ whose boundary is decomposed into  three non-overlapping circular arcs considered as the {\em sides} of $T$. The three endpoints of these arcs are the {\em vertices} of $T$.

We say that $f$ is a {\em conformal map} between two circular arc triangles $X$ and $Y$ in  $\bC$ if $f$ is a homeomorphism between $X$ and $Y$ that sends the vertices of $X$ to the vertices of $Y$ and is a biholomorphism between $\inte(X)$ and  $\inte(Y)$. 
We say that $f$ is an {\em anti-conformal map} between 
$X$ and $Y$ if $z\in X\mapsto \overline{f(z)}\in \overline Y$ is a conformal map between $X$ and the  complex conjugate  $\overline Y\coloneqq\{\overline w: w\in Y\}$ of $Y$.

We can now state our result.

\begin{thm}\label{main}
The function $\tau \mapsto p(\tau)$ is meromorphic  on 
$\H$. It maps the circular arc triangle 
\begin{equation}\label{T0}
T_0\coloneqq \{ \tau\in \C:  0\le \re(\tau) \le 1/2,\, \im(\tau)>0, \, 
  |\tau|\ge 1\} \cup\{\infty\}
\end{equation}
conformally onto the circular arc triangle
\begin{equation} \label{T1} 
T_1\coloneqq \{ \tau\in\C: 0\le \re(\tau) \le 1/2\}\setminus 
\{ \tau\in\C: \im(\tau)<0,\, |\tau|> 1\} \cup\{\infty\}. 
\end{equation}
Here the vertices of the triangles  correspond under the map $p$
in the following way: 
$$p(i)=-i,\,  p(\tfrac12 (1+i\sqrt3))=\tfrac12 (1-i\sqrt3),\,  
p(\infty)=\infty. $$ 
\end{thm}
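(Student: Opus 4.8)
\emph{Plan.} My strategy is to exploit the transformation behavior of the pair $(\eta_1,\eta_2)$ under a change of the generating pair $(\om_1,\om_2)$ of $\Ga$, and to combine it with the symmetry coming from complex conjugation of the lattice. First I would record the transformation law of $p$ under the modular group. By \eqref{defeta} the map $\ga\in\Ga\mapsto \zeta(u+\ga)-\zeta(u)$ is independent of $u$ and additive in $\ga$, hence a homomorphism $\eta\:\Ga\to\C$; evaluating at $u=-\ga/2$ and using that $\zeta$ is odd shows $\eta(\ga)=2\zeta(\ga/2)$, so $\eta(\om_k)=\eta_k$. Consequently, if a second pair of generators is $\binom{\om_1'}{\om_2'}=M\binom{\om_1}{\om_2}$ with $M\in\PSL(2,\Z)$, then $\binom{\eta_1'}{\eta_2'}=M\binom{\eta_1}{\eta_2}$, which yields the equivariance $p(M\tau)=M\cdot p(\tau)$, the same $M$ acting by a M\"obius transformation on both sides; in particular $p(\tau+1)=p(\tau)+1$ and $p(-1/\tau)=-1/p(\tau)$. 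Meromorphy of $p$ on $\H$ is immediate: fixing $\om_2=1$, $\om_1=\tau$, the series \eqref{zeta} shows $\eta_1=2\zeta(\tau/2)$ and $\eta_2=2\zeta(1/2)$ depend holomorphically on $\tau\in\H$, and the Legendre relation $\eta_2\om_1-\eta_1\om_2=2\pi i$ forces them never to vanish simultaneously.

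Next I would extract the reflection symmetries. Since \eqref{zeta} commutes with complex conjugation, $\overline{\zeta(u;\Ga)}=\zeta(\bar u;\overline\Ga)$, so replacing $(\om_1,\om_2)$ by $(-\overline{\om_1},\overline{\om_2})$ (generators of $\overline\Ga$ with ratio $-\bar\tau$) gives $p(-\bar\tau)=-\overline{p(\tau)}$. Combining this with the two modular relations produces
\[
p(-\bar\tau)=-\overline{p(\tau)},\qquad p(1-\bar\tau)=1-\overline{p(\tau)},\qquad p(1/\bar\tau)=1/\overline{p(\tau)}.
\]
Restricting these to the three sides of $T_0$ — the imaginary axis $\{\tau=-\bar\tau\}$, the line $\{\re\tau=1/2\}=\{\tau=1-\bar\tau\}$, and the unit circle $\{\tau=1/\bar\tau\}$ — shows that $p$ sends them into the imaginary axis, the line $\{\re w=1/2\}$, and the unit circle respectively. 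These are exactly the three circles carrying the sides of $T_1$, and the asserted vertex values $p(i)=-i$, $p(e^{i\pi/3})=e^{-i\pi/3}$, $p(\infty)=\infty$ follow.

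It remains to upgrade this boundary information to the stated conformal equivalence, which is the heart of the matter. The clean route is Schwarz's theory of triangle functions: the three reflections $R_1,R_2,R_3$ of $\H$ in the sides of $T_0$ generate the extended modular group (the $(2,3,\infty)$ reflection group with fundamental domain $T_0$), the three reflections $w\mapsto-\bar w$, $w\mapsto1-\bar w$, $w\mapsto1/\bar w$ generate the corresponding group with fundamental domain $T_1$, and the identities above say precisely that $p$ intertwines them. As $p$ is holomorphic, hence orientation-preserving, the Schwarz reflection principle continues $p$ across each side, and the equivariant continuation realizes $p$ as the developing map of this structure; such a map carries $T_0$ homeomorphically onto $T_1$ and biholomorphically between their interiors. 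Concretely one shows $p\colon\partial T_0\to\partial T_1$ is a homeomorphism and then applies the argument principle on $\bC$, so that each $w\in\inte T_1$ is attained exactly once and no $w\notin T_1$ is attained; the vertex angles $(\pi/2,\pi/3,0)$ match on both triangles, and a single test value (e.g.\ $\tau=\tfrac14+2i$, whose image lies in $\inte T_1$) fixes the orientation so that $\inte T_0$ maps to $\inte T_1$ rather than its complement.

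\emph{Main obstacle.} The delicate point is boundary bijectivity — showing $p$ is strictly monotone along each of the three arcs, so that $p|_{\partial T_0}$ is genuinely a homeomorphism onto $\partial T_1$ and the argument principle applies with winding number one. I expect to settle this either from the discrete-group structure (no neighboring copy of $T_1$ can be entered without violating equivariance near an edge) or, more explicitly, from the closed formula $p(\tau)=\tau-\dfrac{6i}{\pi E_2(\tau)}$, obtained by combining $\eta_2=\tfrac{\pi^2}{3}E_2(\tau)$ with the Legendre relation. This formula reproduces the three vertex values via $E_2(i)=3/\pi$ and $E_2(e^{i\pi/3})=2\sqrt3/\pi$, locates the poles of $p$ at the zeros of the quasimodular series $E_2$ (confirming there are none in $\inte T_0$, so $p$ is finite there), and is the bridge to the hypergeometric and Schwarz-triangle picture that supplies the quantitative control needed to close the monotonicity argument.
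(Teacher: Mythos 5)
Your outline is, in architecture, the paper's own ``alternative'' proof from Section~\ref{s:alt}: derive the $\PSL_2(\Z)$-equivariance of $p$ from the transformation of $(\eta_1,\eta_2)$ (the paper's Proposition~\ref{prop:poly}), use the conjugation symmetry $p(-\bar\tau)=-\overline{p(\tau)}$ to pin the three sides of $T_0$ to the three circles carrying $\partial T_1$, show $p\:\partial T_0\to\partial T_1$ is a homeomorphism, and conclude by an argument-principle statement adapted to $\infty\in\partial V$ (the paper's Proposition~\ref{prop:arg}); your orientation ``test value'' plays the role of hypothesis (ii) there. That setup, the meromorphy argument, and the special values are all sound, though note that side--circle incidence alone only gives $p(i)\in\{i,-i\}$ and $p(\rho)\in\{\rho,\bar\rho\}$; the paper resolves the sign via the lattice symmetries $i\Gamma=\Gamma$, $\rho\Gamma=\Gamma$ in Section 2 (equivalently, one can use that $p$ has no fixed points).

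The genuine gaps are exactly at the steps you defer, and they are the heart of the proof. (a) Monotonicity along the sides presupposes $p'\neq 0$ there; nothing in your proposal delivers this. The paper gets $p'=E_4/E_2^2$ (see \eqref{p'rep}) from Ramanujan's identity \eqref{E2der}, combines $E_2\neq 0$ on $T_0\cap\H$ --- proved by the explicit estimate $|E_2(\tau)-1|\le 24|q|/(1-|q|)^3$ of Lemma~\ref{E2est}, which your parenthetical ``confirming there are none in $\inte T_0$'' merely asserts --- with the valence-formula fact \eqref{E4zeros} that the zeros of $E_4$ are exactly the orbit of $\rho$. (b) On the circular side $B$ strict monotonicity does \emph{not} imply injectivity: the image lies on the unit circle and could a priori wrap around it several times. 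The paper excludes wrapping by observing that it would force a point $\tau\in B$ with $p(\tau)=\tau$, which is impossible since $p(\tau)-\tau=-6i/(\pi E_2(\tau))$ never vanishes; your proposal contains no substitute for this fixed-point trick, and your ``discrete-group structure'' fallback is insufficient on its own --- an equivariant holomorphic map intertwining the two reflection groups could still be branched or map $T_0$ over several tiles (indeed $p$ \emph{does} have critical points, precisely at the orbit of $\rho$, Corollary~\ref{cor:critp}), so equivariance must be supplemented by exactly the nonvanishing information in (a), or by the hypergeometric-ODE argument of the paper's first proof, which you gesture at but do not carry out. (c) With $\infty$ on both boundaries, the argument principle requires local injectivity of $p$ near $\infty$ (hypothesis (i) of Proposition~\ref{prop:arg}); the paper checks this via $p'(\tau)=1+O(q)$, so that $\re p'>0$ on a half-plane, and the Noshiro--Warschawski criterion (\cite[Proposition 1.10]{Po}). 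Your plan never addresses the behavior at $\infty$ beyond the limit value, so as written the argument-principle step does not close.
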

Of course,  the last relation has to be understood in a limiting sense: if $\tau \in T_0\cap \H\to \infty$, then $p(\tau)\in T_1\cap \H \to \infty$. In the  following, we always think 
of $p$ (and similar functions) as extended to the point $\infty$ 
in this way.  

\begin{figure}[b]
 \begin{overpic}[ scale=0.5
    ]{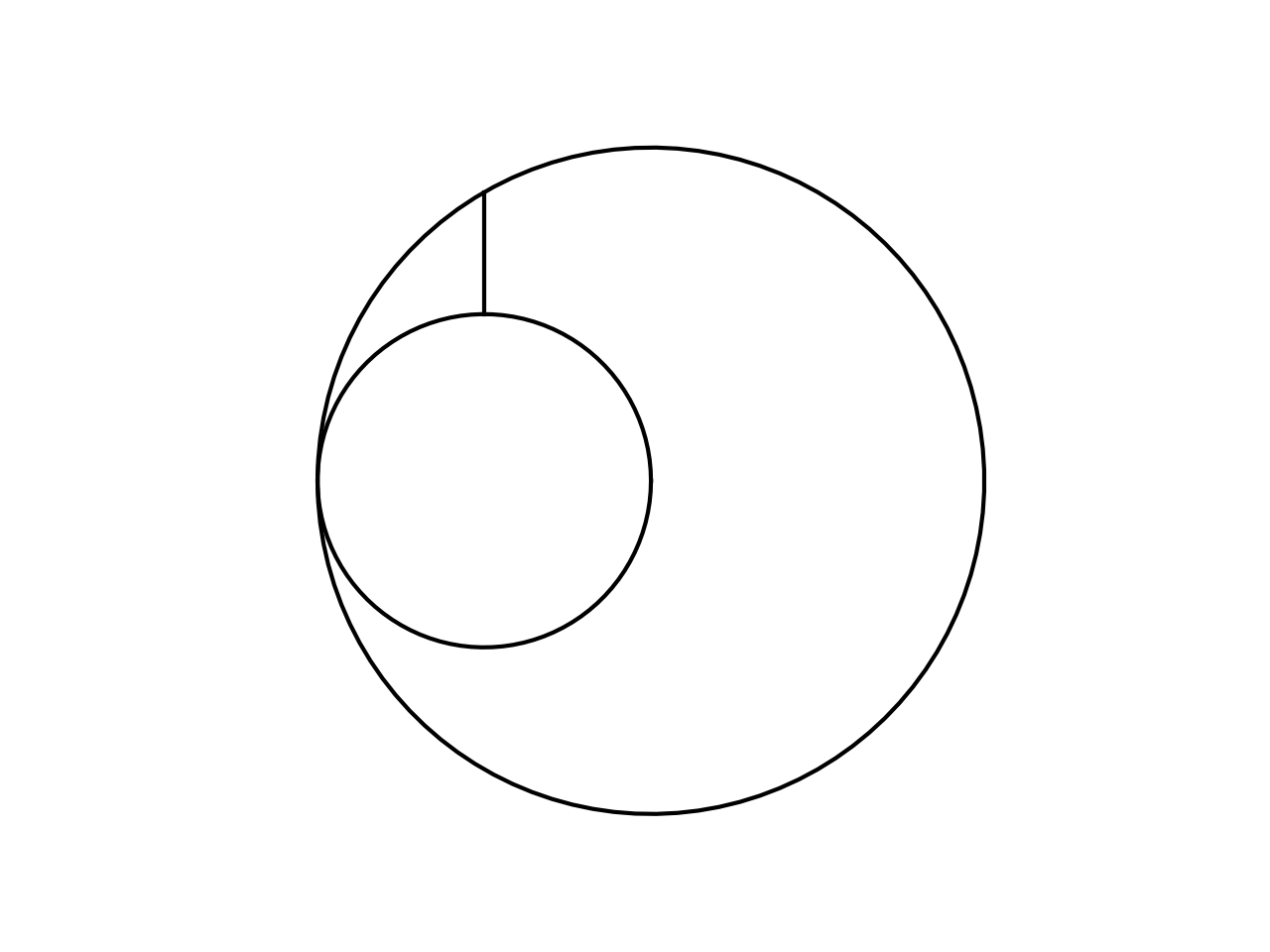}
      \put(75,51.5){ $D_1\cap D_2=T\cup T'$}

       \put(31.4,51.5){ $T$}
           \put(45,51.5){ $T'$}
                \end{overpic}
  \vspace{-1cm}
\caption{A circular arc triangle $T$ with angles $0$, $\pi/2$, $\pi/3$, and its complementary triangle $T'$.}
\label{fig:comple}
\end{figure}

 In order to give a more intuitive description of the map $p$ based on Theorem~\ref{main}, we proceed as follows. Note that the triangle $T_0$ has angles $0$, $\pi/2$, $\pi/3$
at its vertices $\infty$, $i$, $\rho\coloneqq\tfrac12 (1+i\sqrt3)$, respectively.
 Let $T\sub \bC$  be an arbitrary circular arc triangle in the Riemann sphere with angles equal to $0$, $\pi/2$, $\pi/3$. Each such triangle is  M\"obius equivalent to $T_0$ or its complex conjugate $\overline {T_0}$ (see Lemma~\ref{lem:triuniq} below). This implies that the triangle $T$ is contained in the  intersection $D_1\cap D_2$ of two closed disks $D_1$ and $D_2$ in $\bC$ each bounded by a circle in $\bC$ containing  one of the  sides of  $T$ and such that  these boundaries touch at the vertex of $T$ with angle $0$.  Then  the closure $T'$ of  the set 
 $(D_1\cap D_2) \setminus T$ 
is a circular arc triangle  that we call {\em complementary} to $T$, since $D_1\cap D_2=T\cup T'$. Note that $T'$  has the same vertices as $T$, is bounded by arcs of the same three circles as $T$, and has angles $0$, $\pi/2$, $2\pi/3$ (see Figure~\ref{fig:comple}). This relation of $T$ and its complementary triangle 
$T'$ is invariant under M\"obius transformations and complex conjugation.

\begin{figure}
 \begin{overpic}[ scale=0.4
    ]{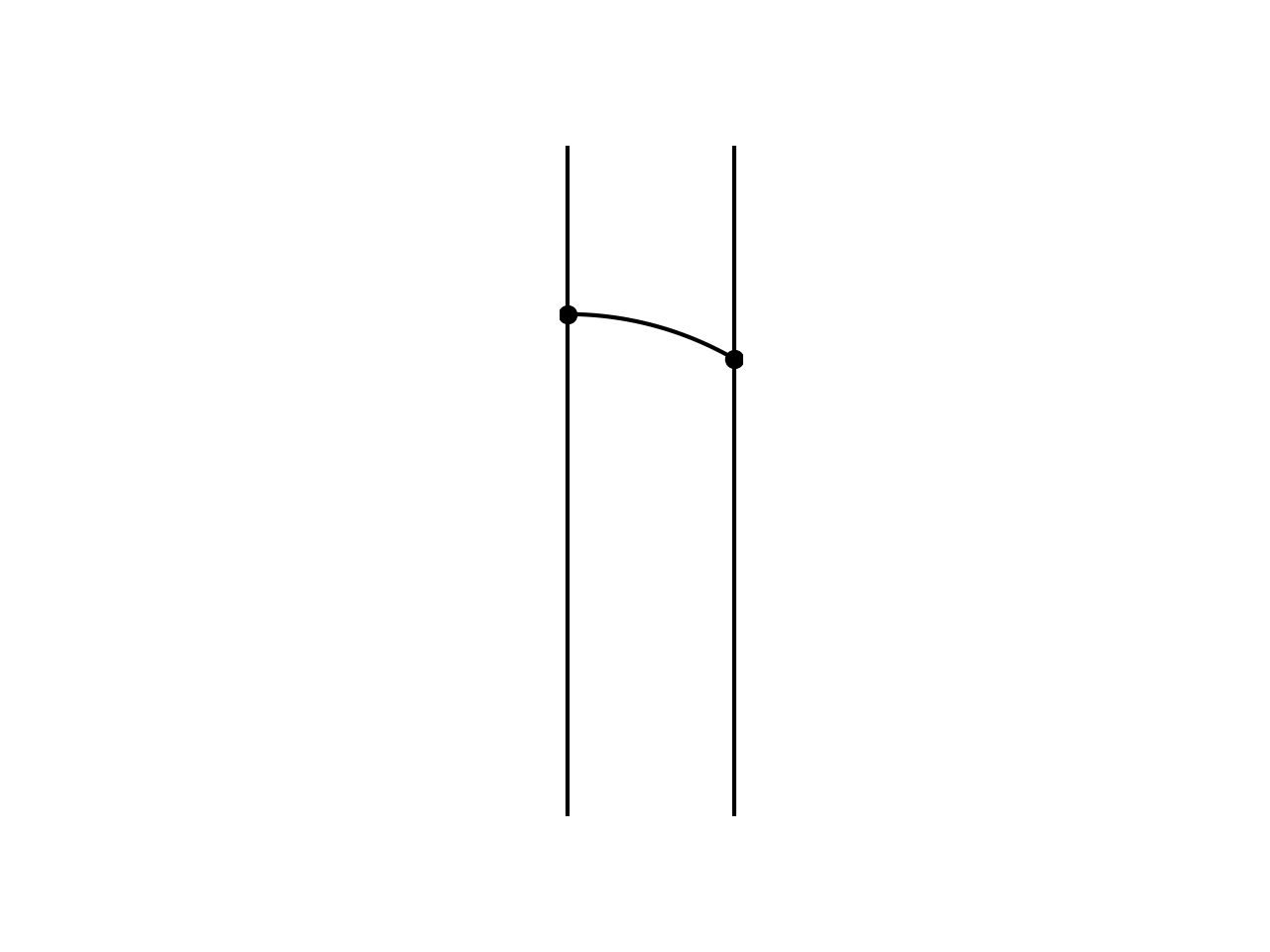}
       \put(47,40){ $\overline {T_1}$}
           \put(47,55){ $T_0$}
                   \put(39,49){ $i$}
    \put(58, 45.5){ $\rho$}

           \end{overpic}
  \vspace{-1cm}
\caption{The  circular arc triangle $T_0$ with its  complement $\overline{T_1}$.}
\label{fig:bar}
\end{figure}

Note that $\overline {T_1}$ is the complementary triangle to $T_0$ in this sense (see Figure~\ref{fig:bar}). In particular, the complex conjugate  $\tau \mapsto 
\overline p(\tau)\coloneqq\overline {p(\tau)}$ of $p$ maps $T_0$ anti-conformally onto its complementary triangle 
$$ T_0'=\overline {T_1}=\{\infty\} \cup \{ \tau\in\C: 0\le \re(\tau) \le 1/2\}\setminus 
\{ \tau\in\C: \im(\tau)>0,\, |\tau|>1\} 
$$ 
such that the common vertices $i$, $\rho$, $\infty$ are fixed under $\overline p$.

Now it is well known that by successive reflections in the sides of $T_0$ we generate a tessellation $\mathcal{T}$ of the set 
$\H^*\coloneqq\H\cup \Q\cup\{\infty\}$
 by circular arc triangles with angles $0$, $\pi/2$, $\pi/3$ (see \cite[Section 2.2]{Sch}) and Figure~\ref{fig:T} for an illustration). 
 \begin{figure}[h]
 \begin{overpic}[ scale=0.55
    ]{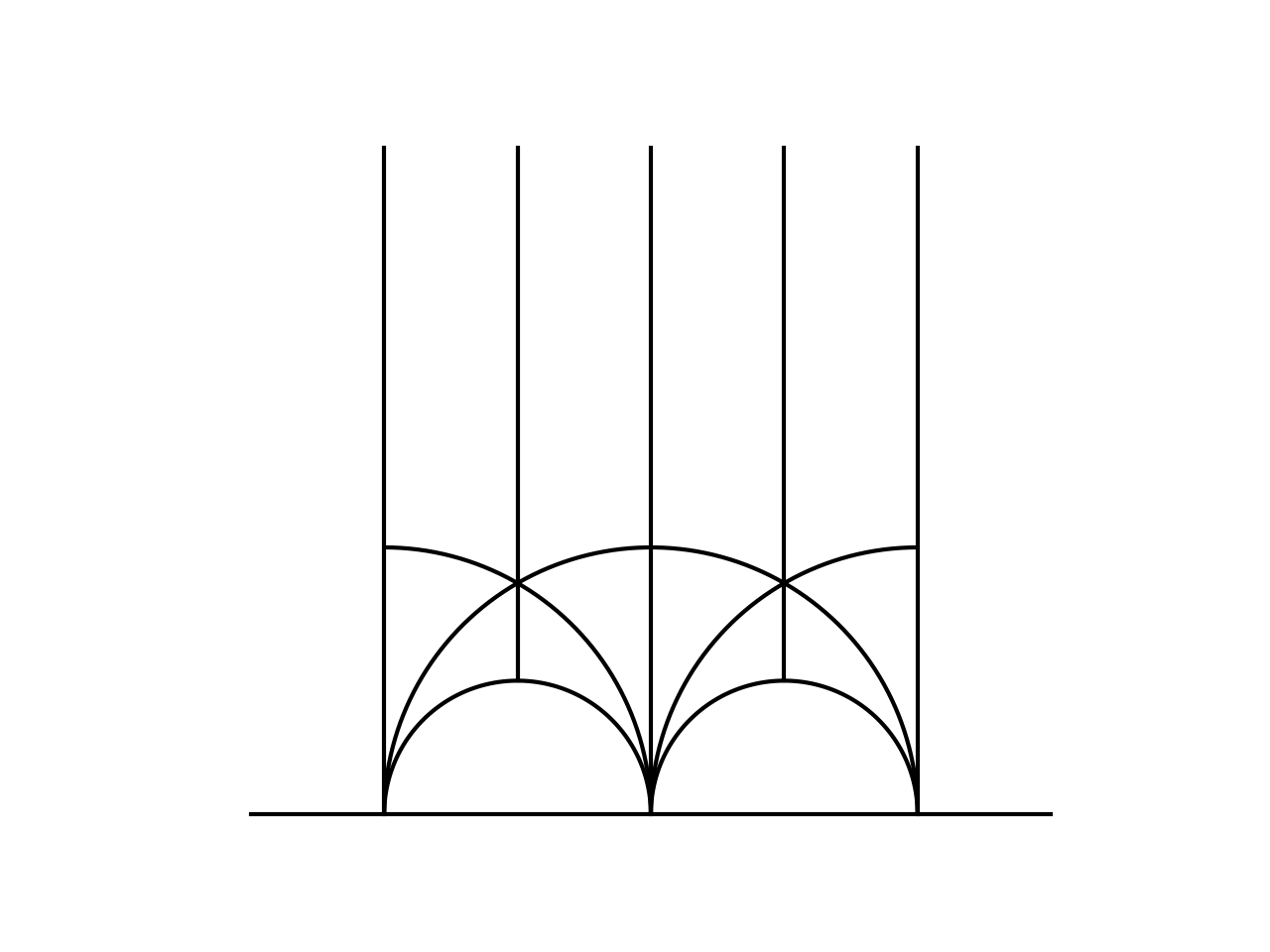}
    \put(51,33){ $i$}
    \put(61.5,31.5){ $\rho$}
     \put(72,12){ $1$}
       \put(52,12){ $0$}
        \put(52,45){ $T_0$}
           \end{overpic}
  \vspace{-1cm}
\caption{Some circular arc triangles in the tessellation 
$\mathcal{T}$.}
\label{fig:T}
\end{figure}

  Let  $G$ be the group of conformal or anti-conformal homeomorphisms of $\bC$ generated  by the reflections in the sides of $T_0$.   By applying the Schwarz Reflection Principle repeatedly and the invariance of the relation between a triangle and its complementary triangle  under this procedure, we immediately 
  see that $\overline p \circ S =S\circ \overline p$ for all $S\in G$. This gives the following consequence.

\begin{cor}\label{TT}Let $T$ be any triangle  in the tessellation $\mathcal{T}$ of $\H^*$ obtained by successive reflections in the sides of $T_0$, and let $T'$ be the complementary triangle of $T$ as defined above. Then $\overline p$ is an anti-conformal map of $T$ onto $T'$ that fixes the vertices of $T$. 
\end{cor}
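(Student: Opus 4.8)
The plan is to prove the corollary by transporting to an arbitrary tile the assertion that has already been recorded for $T_0$ itself, using the two facts supplied just before the statement: the commutation relation $\overline p\circ S=S\circ\overline p$ for all $S\in G$, and the invariance of the complementary-triangle relation under M\"obius transformations and complex conjugation. The base case I would isolate first: by the discussion preceding the corollary, $\overline p$ maps $T_0$ anti-conformally onto its complementary triangle $T_0'=\overline{T_1}$, fixing the three vertices $i$, $\rho$, $\infty$ of $T_0$.

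Next, given an arbitrary tile $T\in\mathcal T$, the construction of the tessellation by successive reflections in the sides of $T_0$ furnishes some $S\in G$ with $T=S(T_0)$; being a finite composition of reflections, $S$ is a homeomorphism of $\bC$ that is either conformal or anti-conformal, and the same is true of $S^{-1}\in G$. Using the commutation relation I would factor the restriction of $\overline p$ to $T$ as
\begin{equation*}
\overline p|_T=S\circ(\overline p|_{T_0})\circ(S^{-1}|_T),
\end{equation*}
which exhibits $\overline p|_T$ as a composition of three homeomorphisms and hence a homeomorphism onto its image. To identify the image I would argue that, since $T_0'$ is complementary to $T_0$ and this relation is preserved by every element of $G$ (each such element being a composition of conjugations and M\"obius maps), the triangle $S(T_0')$ is exactly the complementary triangle $T'$ of $T=S(T_0)$; therefore $\overline p(T)=S(\overline p(T_0))=S(T_0')=T'$. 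The vertices of $T$ are $S(i)$, $S(\rho)$, $S(\infty)$, and each is fixed, since for instance $\overline p(S(i))=S(\overline p(i))=S(i)$, and likewise for $\rho$ and $\infty$ (the latter in the same limiting sense used for $T_0$, so that the identity persists at the cusp). Note that $T'$ depends only on $T$, so the conclusion is independent of the choice of $S$.

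It remains to confirm that $\overline p|_T$ reverses orientation. In the factorization above, $\overline p|_{T_0}$ contributes one orientation reversal while $S$ and $S^{-1}$ have equal parity and so together preserve orientation; the total is therefore orientation-reversing, whether $S$ is conformal or anti-conformal, so $\overline p$ is anti-conformal from $T$ onto $T'$. I expect the only genuinely delicate point to be the image identification $S(T_0')=T'$, i.e.\ the fact that $S$ carries the complementary triangle of $T_0$ to the complementary triangle of $S(T_0)$; once the $G$-invariance of the complementary relation is granted, the remainder of the argument is a formal conjugation of the base case together with the routine parity count. The substantive analytic input---the commutation relation $\overline p\circ S=S\circ\overline p$, obtained from Theorem~\ref{main} by applying the Schwarz Reflection Principle across the sides of $T_0$ and propagating it over $G$ by composition---has already been secured upstream.
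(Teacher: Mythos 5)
Your proposal is correct and follows essentially the same route as the paper: the paper obtains Corollary~\ref{TT} directly from the commutation relation $\overline p\circ S=S\circ\overline p$ for $S\in G$ (via repeated Schwarz reflection) together with the $G$-invariance of the complementary-triangle relation, which is exactly your conjugation of the base case $\overline p(T_0)=T_0'$ by some $S\in G$ with $T=S(T_0)$. Your added details---the factorization $\overline p|_T=S\circ(\overline p|_{T_0})\circ(S^{-1}|_T)$, the vertex computation $\overline p(S(v))=S(\overline p(v))=S(v)$, and the parity count showing the composition is anti-conformal for either parity of $S$---merely make explicit what the paper treats as immediate.
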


 The M\"obius transformations of the form 
 \begin{equation}\label{Smodgr}
\tau\mapsto  S(\tau) = \frac{a\tau+b}{c\tau+d},
\end{equation}
 where $a,b,c,d\in \Z$ with $ad-bc=1$ form the {\em modular group}
 $\PSL_2(\Z)$.  This is a subgroup of index $2$ of the group $G$ defined above. 
 
 It follows from our previous considerations that
 \begin{equation}\label{transpS}
 p\circ S=S\circ p  \end{equation}
 for all $S\in \PSL_2(\Z)$.
 Meromorphic functions $f$ with this type of transformation behavior ($f\circ S=\widetilde S \circ f$, where $S$ runs through a  Fuchsian group $\Gamma$ and $\widetilde S$ is a M\"obius transformation associated with $S$)  are called  {\em polymorphic} in the classical literature. 
 We will refer to property \eqref{transpS} of $p$ as its {\em $\PSL_2(\Z)$-equivariance}.

 We will soon see that this equivariant  behavior of $p$ can easily be derived analytically
 (see Proposition~\ref{prop:poly}). Theorem~\ref{main} and  Corollary~\ref{TT} essentially explain this behavior from a geometric perspective.

The circular arc triangle 
\begin{equation}\label{eq:V0}
V_0=\{\tau \in \H: 0\le \re(\tau)\le 1,\, |\tau-1/2|\ge 1/2\}\cup\{\infty\}
\end{equation} 
has all its angles equal to $0$. It consists of a union of six triangles from the tessellation $ \mathcal{T}$ (see Figure~\ref{fig:pbar}). It is easy to see from Corollary~\ref{TT} that the contin\-uous extension of $\overline p$  to $V_0$ attains every value in $\bC$  
once, twice, or thrice. This is illustrated in Figure~\ref{fig:pbar}, where the level of darkness indicates how often the regions are 
covered (the darker the gray, the more often the region is covered; note that some relevant circles are drawn in black which does not correspond to how often the points in these circles are attained).  
In particular,    $\overline p(V_0\cap\H)=
\bC=p(V_0\cap\H)$. 

\begin{figure}[t]
 \begin{overpic}[ scale=0.7
    ]{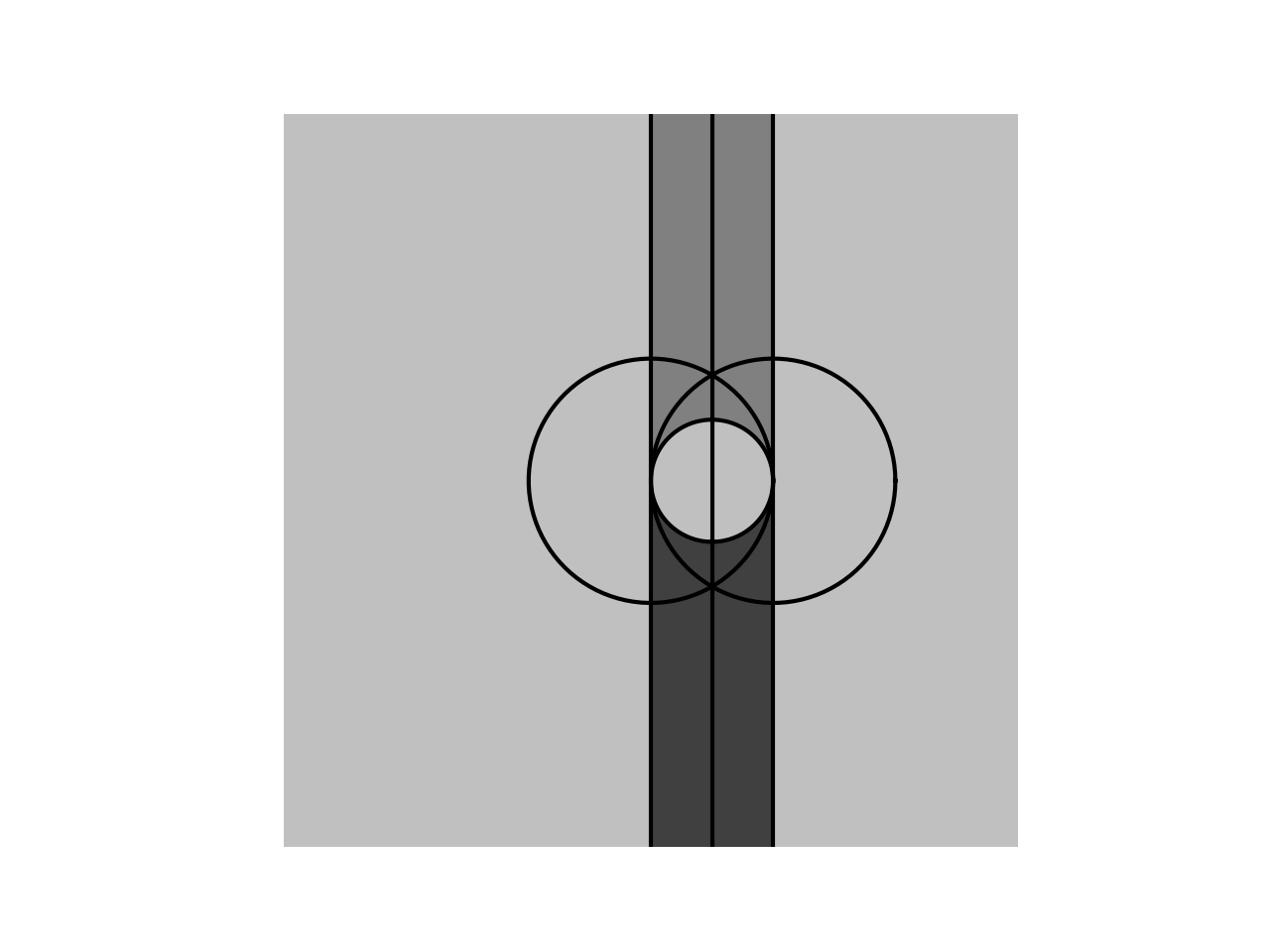}
                \put(46,55){ $V_0$}
                   \put(51,48){ $i$}
                     \put(56,47){ $\rho$}
                    \put(51,36){ $0$}
                     \put(61,36){ $1$}

           \end{overpic}
  \vspace{-1cm}
\caption{Sets covered by the map $\overline p$ restricted to $V_0$ .}
\label{fig:pbar}
\end{figure}

Now if we successively reflect in the sides of $V_0$, we  obtain  another tessellation  $\mathcal{T}'$ of $\H^*$ by circular triangles whose angles are all equal to $0$. 
Each of the  triangles $V$ in this tessellation is a union 
of six triangles in  $ \mathcal{T}$. Then Corollary~\ref{TT}
and the previous discussion imply that $p(V\cap\H)=\bC$. 
 We obtain the following consequence.
\begin{cor}\label{He} The map $p$ attains every value in $\bC$ infinitely often on $\H$. 
\end{cor}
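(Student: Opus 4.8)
The plan is to combine the surjectivity of $\overline p$ on the ideal triangle $V_0$ with the equivariance of $\overline p$ under the reflection group $G$, and then to exploit the fact that the triangles of the tessellation $\mathcal{T}'$ are \emph{ideal} (all angles equal to $0$), so that their vertices lie on $\partial\H$ rather than inside $\H$. First I would record the two inputs. From the discussion preceding the statement, $\overline p(V_0\cap\H)=\bC$, and since $\bC$ is invariant under complex conjugation this gives $p(V_0\cap\H)=\bC$ as well. Moreover, every side of $V_0$ lies on a circle or line that is a side of a triangle of $\mathcal{T}$, so each reflection in a side of $V_0$ belongs to $G$; hence the subgroup $G'$ that these reflections generate satisfies $G'\subseteq G$, and the triangles of $\mathcal{T}'$ are exactly the sets $V=S(V_0)$ with $S\in G'$.

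Using $\overline p\circ S=S\circ\overline p$ for all $S\in G$, I would then compute, for any $V=S(V_0)$ in $\mathcal{T}'$,
\[
\overline p(V\cap\H)=\overline p\bigl(S(V_0\cap\H)\bigr)=S\bigl(\overline p(V_0\cap\H)\bigr)=S(\bC)=\bC,
\]
and conjugating once more yields $p(V\cap\H)=\bC$ for every triangle $V$ in $\mathcal{T}'$. This is the reformulation already anticipated in the text.

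The second half upgrades surjectivity on each $V$ to surjectivity infinitely often. Fix $w\in\bC$. Since $\mathcal{T}'$ is an infinite tessellation, for each of its infinitely many triangles $V$ I can choose a point $\tau_V\in V\cap\H$ with $p(\tau_V)=w$, using $p(V\cap\H)=\bC$. It remains to see that infinitely many of the $\tau_V$ are distinct. Here the key geometric observation is that every vertex of every triangle of $\mathcal{T}'$ is a cusp, i.e.\ it lies in $\Q\cup\{\infty\}\subseteq\partial\H$, precisely because all angles equal $0$; consequently two distinct triangles of $\mathcal{T}'$ can meet inside $\H$ only along the relative interior of a common side. Thus each point of $\H$ lies in $V\cap\H$ for at most two triangles $V$, so any fixed value can be repeated among the $\tau_V$ for at most two indices. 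Since there are infinitely many triangles $V$, the set $\{\tau_V\}$ is infinite, and hence $p$ attains $w$ at infinitely many points of $\H$.

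The main obstacle, which this argument is designed to circumvent, is the bookkeeping of multiplicities: surjectivity on each single $V$ does not by itself preclude that the chosen value is always attained on shared boundaries, which could in principle collapse the preimages to a finite set. The ideal (angle-$0$) nature of the triangles of $\mathcal{T}'$ is exactly what rules this out, since it forces every coincidence of the $\tau_V$ to occur in the relative interior of a shared edge, and each such edge borders only two triangles. I would also note in passing that $p$ is nonconstant and meromorphic on $\H$ by Theorem~\ref{main}, so that the value $w=\infty$ (a pole) and finite values $w$ are handled uniformly as points of $\bC$.
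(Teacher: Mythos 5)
Your proposal is correct and follows essentially the same route as the paper: the paper likewise deduces $p(V\cap\H)=\bC$ for every triangle $V$ of the ideal tessellation $\mathcal{T}'$ from $p(V_0\cap\H)=\bC$ together with the $G$-equivariance $\overline p\circ S=S\circ\overline p$, and then concludes from the infinitude of such triangles. The only difference is that you make explicit the multiplicity bookkeeping (each point of $\H$ lies in at most two tiles of $\mathcal{T}'$, since all vertices are cusps in $\Q\cup\{\infty\}$), which the paper leaves implicit; this is a worthwhile clarification, not a change of method.
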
 
This was the main result in the papers \cite{He66, He67} by Heins. 
He proved it by a purely function-theoretic argument without 
 an explicit description of the map~$p$.

 We will give two proofs of Theorem~\ref{main}. The first one is along classical lines.  We suitably normalize the 
 quasi-periods $\eta_1$ and $\eta_2$ and  show that these normalized periods form a fundamental system of  
  solutions of a hypergeometric differential equation with a certain elliptic modular  function,
  the {\em absolute invariant $J$}, as the variable (see \eqref{etaODE}). This will show that $p=p(J)$ is a Schwarz triangle map sending the closed lower $J$-halfplane considered as a circular arc triangle with vertices $0$, $1$, $\infty$ to a circular arc triangle with angles $ 2\pi/3,\pi/2,0$.  
On the other hand, it is known that $J$ is a conformal map of  the circular arc triangle $T_0$ onto the  lower $J$-halfplane. This will lead to the  explicit description of the map $\tau \mapsto p(\tau)$.

Our second approach to Theorem~\ref{main} is more direct: we will show that $p$ is a homeomorphism from $\partial T_0$ onto $\partial T_1$. The statement then essentially follows from the Argument Principle (see Proposition~\ref{prop:arg}). 

We should emphasize that none of our results are really new. The hyper\-geometric ODE 
\eqref{etaODE}  for the suitably normalized quasi-periods $\eta_1$ and $\eta_2$ goes 
back to at least as far as  Fricke--Klein. The quasi-periods  of $\zeta$ are really the periods of the elliptic integral
 of the second kind in Weierstrass normalization (see \cite[p.~152 ff.\ and p.~198 ff.]{Fr} for a thorough discussion).  The ODE valid for them (in suitable normalization) is an example of what is now called a  Picard--Fuchs equation. They are satisfied  by periods of other elliptic and more general integrals (see \cite{Zag18} for a gentle introduction). Fricke--Klein  were also well aware of the connection to Schwarz triangle maps, but they seem not to have stated Theorem~\ref{main} explicitly even though it easily follows from their considerations. 

In the more recent literature, the function $p$ (in the form \eqref{pE2} below) was considered by various authors (see, for example, \cite{SS12, IJT, WY14}). In \cite{SS12}  the connection to hypergeometric ODEs was realized and an explicit formula for the Schwarzian derivative $\{p,\tau\}$ was obtained (see Section~\ref{s:rems} for more discussion), but the explicit geometric description of $p$ was not pointed out. 
 In  \cite{WY14} some mapping properties of $p$ were  studied without the full 
 realization of the statement of Theorem~\ref{main}. Actually, the authors of \cite{WY14} explicitly ask whether $p$ is a conformal map on the fundamental region 
  of the modular group given by the union of $T_0$ and its reflection in the imaginary axis.  This immediately follows from Corollary~\ref{TT}.
In \cite{IJT} the authors do prove a biholomorphism property of $p$ that is equivalent to Theorem~\ref{main}. Their method is somewhat ad hoc, based on similar considerations as  our second argument for the proof of Theorem~\ref{main}.

This paper is of an expository nature giving an introduction to this  
 classical subject. 
 An interesting topic for further investigation is whether mapping properties similar to Theorem~\ref{main} can also be obtained for  ratios of periods  of other naturally appearing elliptic integrals.   It is to be hoped that this paper is a starting point   for 
  further studies along these lines.

\medskip

\noindent
{\bf Acknowledgements.} I would like to thank Alex Eremenko for some interesting discussions and comments on this paper, and in particular for bringing  references \cite{He66, He67} to my attention which motivated the present investigation,  Bill Duke for interesting discussions and pointing out reference \cite{Wat},  Pietro Poggi-Corradini for bringing several typos to my attention, and the anonymous referee for a careful reading of the paper that led to many improvements.

  \section{First facts about the quasi-periods}
 We first compute some special values of $p$ and then derive a representation of $p$ in terms of a Fourier series. All of this is well known from the classical literature (see \cite{Fr}, for example). A modern account is given in  \cite{Cha}. 
 
 If $\om_1=i$ and $\om_2=1$, 
 then  $\Gamma=\{ki+n:k,n\in \Z\}$ and so $i\Gamma=\Gamma$. This implies that for  
  $\zeta(u)=\zeta(u;\Gamma)$, we have 
 $$ \zeta(iu; \Gamma)=\zeta(iu; i\Gamma)= -i\zeta(u; \Gamma).$$ 
 Hence
 $$ \eta_1=2\zeta(i/2)=-2i\zeta(1/2)=-i\eta_2, $$
 and so
 \begin{equation}\label{p(i)}
p(i)=\eta_1/\eta_2=-i.
\end{equation}

Throughout this paper, we use the notation
\begin{equation}\label{rho}
\rho=\tfrac{1}{2} (1+i\sqrt 3).
\end{equation}
Now let  $\om_1=\rho$, $\om_2=1$, $\Gamma=\{k\rho+n:k,n\in \Z\}$. Then
$$\rho\Gamma=\Gamma$$ which implies 
$$ \zeta(\rho u)= \zeta(\rho u; \Gamma)=\zeta(\rho u; \rho \Gamma)=\rho^{-1} \zeta(u; \Gamma)=\rho^{-1} \zeta(u)=\overline \rho \zeta(u).
$$ 
Hence 
$$ \eta_1=2\zeta(\rho/2 )=2\overline \rho \zeta(1/2 )=\overline \rho \eta_2,$$    
and 
  \begin{equation}\label{prho}
p(\rho)=\eta_1/\eta_2=\overline \rho.
\end{equation}

Now let $\om_1$ and $\om_2$ again be arbitrary,
$\Gamma$ be the associated  rank-2 lattice as in \eqref{latt}, and 
$\zeta(u)=\zeta(u;\Gamma)$. If we integrate $\zeta$ over the contour given by the parallelogram $Q$ with vertices
$ \pm \om_1/2\pm \om_2/2$, in  positive orientation, then the residue theorem gives us the {\em Legendre relation}    
\begin{equation}\label{Legr}
 2\pi i=\om_1\eta_2-\om_2\eta_1. 
\end{equation}
See \cite[pp.~50--51]{Cha} for more details. 

We will now prove a much deeper fact that connects the quasi-periods with the theory of modular forms which we will review in  Section~\ref{s:modforms}. 
We first set up some notation. As before, $\tau\in \C$ with $\im(\tau)>0$  will denote a variable in the open upper halfplane $\H$.
 Throughout this paper, we set 
 \begin{equation} q\coloneqq e^{2\pi i \tau}. \end{equation}
 Note that $|q|<1$ for $\tau\in \H$. 
We define
\begin{equation}
\sigma_k(n)\coloneqq \sum_{m=1,\, m|n}^nm^k
\end{equation}
 for $k,n\in \N$.
So $\sigma_k(n)$  is the sum of all $k$th-powers $m^k$ of natural numbers $m$ that divide $n$. 
Finally, we define 
  \begin{equation}\label{E2} E_2(\tau)\coloneqq 1-24\sum_{n=1}^\infty  \frac{nq^n}{1-q^n}=1-24\sum_{n=1}^\infty \sigma_1(n)q^n.
  \end{equation}
It is immediate that this series (as a function of $\tau\in \H)$ converges locally uniformly on $\H$. Hence $E_2$ is a holomorphic function on $\H$. 
\begin{prop} \label{prop:eta} For given $\om_1,\om_2$ let  $\eta_1,\eta_2$ be the quasi-periods of the associated $\zeta$-function. Then 
\begin{align}
 \eta_1&= -\frac{2\pi i}{\om_2}+\frac{\pi^2\om_1} {3 \om_2^2} E_2(\tau),\label{eta1}\\
 \eta_2 &= \frac{\pi^2} {3 \om_2} E_2(\tau). \label{eta2}
\end{align}
In particular,
\begin{equation}\label{pE2}
 p(\tau)=\eta_1/\eta_2=\tau-\frac{6i}{\pi E_2(\tau)}.
\end{equation}
\end{prop}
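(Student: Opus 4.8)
The plan is to establish the two formulas \eqref{eta1} and \eqref{eta2} for the quasi-periods, since the formula \eqref{pE2} for $p(\tau)$ follows immediately by dividing: once we know
\begin{equation*}
\eta_1= -\frac{2\pi i}{\om_2}+\frac{\pi^2\om_1}{3\om_2^2}E_2(\tau),\qquad
\eta_2=\frac{\pi^2}{3\om_2}E_2(\tau),
\end{equation*}
we compute $p=\eta_1/\eta_2$ by multiplying numerator and denominator by $3\om_2^2/(\pi^2)$, which gives $p=\tau-6i\om_2/(\pi^2 E_2(\tau))\cdot(1/\om_2)\cdot\om_2$; more carefully, the first term $-2\pi i/\om_2$ contributes $-2\pi i/\om_2\cdot 3\om_2/(\pi^2 E_2)=-6i/(\pi E_2)$ and the second term contributes $\tau$, yielding exactly $p(\tau)=\tau-6i/(\pi E_2(\tau))$. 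So the whole content is in the two quasi-period formulas, and I expect $\eta_2$ to be the fundamental one from which $\eta_1$ is recovered via the Legendre relation.

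First I would reduce to the normalized lattice $\om_2=1$, $\om_1=\tau$, using the homogeneity of degree $-1$ of $\eta_1,\eta_2$ already established in the introduction: under $(\om_1,\om_2)\mapsto(t\om_1,t\om_2)$ the quasi-periods scale by $t^{-1}$. Taking $t=\om_2$ reduces $\Gamma$ to the lattice generated by $\tau$ and $1$, and the claimed formulas become $\eta_2=\tfrac{\pi^2}{3}E_2(\tau)$ and $\eta_1=-2\pi i+\tfrac{\pi^2}{3}\tau E_2(\tau)$ for this normalized lattice, with the factors of $\om_2$ reinstated afterward by homogeneity. The core computation is then to show that for the lattice $\Z\tau+\Z$ we have $\eta_2=2\zeta(\tfrac12)=\tfrac{\pi^2}{3}E_2(\tau)$.

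The main step is the classical Fourier expansion. Using the standard identity $\zeta(u)=\tfrac{\pi^2}{3}E_2(\tau)\,u\;-\;\pi\cot(\pi u)\;-\;2\pi\sum_{n\ge 1}\frac{q^n}{1-q^n}\sin(2\pi n u)$ type of decomposition (which comes from rearranging the defining series \eqref{zeta}, summing over the horizontal period $\om_2=1$ first via the cotangent partial-fraction expansion $\pi\cot(\pi u)=\tfrac1u+\sum_{m\ne 0}(\tfrac1{u-m}+\tfrac1m)$, and then summing the resulting geometric-type series over the vertical translates by $\tau$), I would read off $\eta_2$ as the coefficient governing the shift $u\mapsto u+1$. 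Concretely, the $\pi\cot(\pi u)$ and the $\sin$-series are $1$-periodic and contribute nothing to $\zeta(u+1)-\zeta(u)$, while the linear term $\tfrac{\pi^2}{3}E_2(\tau)u$ contributes exactly $\tfrac{\pi^2}{3}E_2(\tau)$, giving $\eta_2=\tfrac{\pi^2}{3}E_2(\tau)$ as desired. The value $\eta_1$ then follows from the Legendre relation \eqref{Legr}, $\om_1\eta_2-\om_2\eta_1=2\pi i$: solving for $\eta_1$ with $\om_1=\tau$, $\om_2=1$ gives $\eta_1=\tau\eta_2-2\pi i=\tfrac{\pi^2}{3}\tau E_2(\tau)-2\pi i$, matching the normalized claim.

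I expect the main obstacle to be the \emph{justification of the Fourier expansion itself} — that is, rigorously rearranging the conditionally convergent (after the regularizing terms) double series \eqref{zeta} into the cotangent-plus-$q$-series form, controlling the order of summation, and correctly tracking the coefficient of the linear-in-$u$ term that produces $E_2(\tau)$. The subtlety is precisely that $E_2$ is the \emph{non-modular} (quasi-modular) Eisenstein series, so its appearance as the coefficient of $u$ is tied to the non-absolute convergence and the specific summation order (rows before columns); getting the constant $\pi^2/3$ and the factor $24$ in \eqref{E2} to line up requires care with the expansion $\pi\cot(\pi u)=\pi i - 2\pi i\sum_{n\ge 0}q_u^n$ where $q_u=e^{2\pi i u}$, together with the identity $\sum_{k\ge 1}\sum_{m\ge 1}m\,q^{km}=\sum_{n\ge1}\sigma_1(n)q^n$. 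Once that bookkeeping is done, everything else is formal division and an appeal to the already-stated Legendre relation.
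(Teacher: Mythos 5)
Your strategy is correct and, at its core, the same as the paper's (row-by-row summation of \eqref{zeta}, the $\pi^2/\sin^2$ identity to produce the $q$-series and the $\sigma_1(n)$ bookkeeping, then Legendre for $\eta_1$), but your implementation routes through a stronger intermediate statement than needed, and the paper's variant is worth noting because it dissolves exactly the obstacle you flag. The paper writes $\zeta(u)=\sum_{k\in\Z}\Sigma_k(u)$, where $\Sigma_k$ is the sum over $n\in\Z$ with $k$ fixed (rearrangement legitimate by the absolute convergence of \eqref{zeta}), but instead of deriving the full cotangent/Fourier expansion of $\zeta$ it computes the differences $\Sigma_k(u+\om_2)-\Sigma_k(u)$ directly: the $1/(u-\ga)$ terms telescope within each row, so each difference is the \emph{constant} $\sum_{n\in\Z}\om_2/(k\om_1+n\om_2)^2$, an absolutely convergent series evaluated by the single identity \eqref{sin}; summing over $k$ gives $\eta_2=\frac{\pi^2}{3\om_2}-\frac{8\pi^2}{\om_2}\sum_{n\ge1}\sigma_1(n)q^n=\frac{\pi^2}{3\om_2}E_2(\tau)$, with $\eta_1$ from Legendre exactly as you say. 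Consequently the conditionally convergent constant sums $\sum_n 1/(k\tau+n)$ (principal-value pairing) that enter your full expansion never arise, and no homogeneity reduction is needed since the computation runs for general $\om_1,\om_2$. Two slips in your sketch, both harmless to the logic because you only use $1$-periodicity of the non-linear terms: the classical expansion reads
\begin{equation*}
\zeta(u)=\frac{\pi^2}{3}E_2(\tau)\,u+\pi\cot(\pi u)+4\pi\sum_{n\ge 1}\frac{q^n}{1-q^n}\sin(2\pi nu),
\end{equation*}
so your $-\pi\cot(\pi u)$ is inconsistent with $\zeta(u)\sim 1/u$ at the origin, and the sine-series coefficient is $+4\pi$, not $-2\pi$. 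With the expansion properly justified (as you anticipate), your route is a complete classical proof; the paper's differencing trick is simply the more economical bookkeeping.
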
\label{etaformu} 
Since $E_2(\tau) \to 1 $ as $\im(\tau)\to +\infty$, it follows from \eqref{pE2}
that 
\begin{equation}\label{infty}
p(\tau)\to \infty \text{ as } \im(\tau)\to +\infty. 
\end{equation}

 Formulas \eqref{eta1}  and \eqref{eta2} are well known; see, for example, 
 \cite[p.~311]{Fr}. Since they are the basis for our further considerations, we will give a proof for the sake of completeness. We will follow an argument outlined  in  
  \cite[pp.~262--263]{Fr}.
 \begin{proof} It is enough to show \eqref{eta2}. Then \eqref{eta1} follows from this and the Legendre relation  \eqref{Legr}, and \eqref{pE2} is an immediate consequence of \eqref{eta1} and  \eqref{eta2}.
 
 Our starting point is  formula \eqref{zeta}. For ease of notation we use the abbreviation $(k,n)=k\om_1+n\om_2$ and set 
 $$\Sigma_k(u)\coloneqq \sum_{n\in \Z}
\biggl( \frac{1}{u-(k,n)}+\frac1{(k,n)}+\frac{u}{(k,n)^2}\biggr)$$
for $k\in \Z\setminus\{0\}$. Moreover, we set
$$\Sigma_0(u)\coloneqq\frac{1}u+\sum_{n\in \Z\setminus\{0\}}
\biggl( \frac{1}{u-(0,n)}+\frac1{(0,n)}+\frac{u}{(0,n)^2}\biggr)$$ 
 Since the series \eqref{zeta} converges absolutely and uniformly in $u\in \C\setminus \Gamma$, we can rearrange the order of summation; so 
 $$ \zeta(u)=\sum_{k\in \Z}\Sigma_k(u).$$
 The idea now is to express $\eta_2=\zeta(u+\om_2)-\zeta(u)$ in terms of the differences of the functions $\Sigma_k$.
 First note that
 $$ \Sigma_0(u+\om_2)-\Sigma_0(u)=\frac{2}{\om_2}\sum_{n=1}^\infty\frac{1}{n^2}=
 \frac{\pi^2}{3\om_2}. $$ 
 For the other differences,  we use the standard fact that
 \begin{equation}\label{sin}
\frac{\pi^2}{\sin^2(\pi u)}=\sum_{n\in \Z}\frac{1}{(u-n)^2}
\end{equation}
for $u\in \C\setminus \Z$.
By considering partial sums, we see  that for $k\in \Z\setminus\{0\}$, we have
 \begin{align*}
 \Sigma_k(u+\om_2)-\Sigma_k(u)&=\sum_{n\in \Z}\frac{\om_2}{(k,n)^2}
 =\frac{1}{\om_2}\sum_{n\in \Z}\frac{1}{(k\tau+n)^2}\\
 &=\frac{\pi^2}{\om_2 \sin^2(|k| \pi \tau )}=\frac{-4\pi^2 q^{|k|}}{\om_2(1-q^{|k|})^2}
\\ &=-\frac{4\pi^2}{\om_2}\sum_{n=1}^\infty nq^{n|k|}. 
 \end{align*}
 Putting this all together, we see that
 \begin{align*} \eta_2&=\zeta(u+\om_2)-\zeta(u)=\sum_{k\in \Z}(\Sigma_k(u+\om_2)-\Sigma_k(u))\\
 &= \frac{\pi^2}{3\om_2}-\frac{8\pi^2}{\om_2}\sum_{k=1}^\infty\sum_{n=1}^\infty nq^{nk}=\frac{\pi^2}{3\om_2}-\frac{8\pi^2}{\om_2}\sum_{n=1}^\infty \sigma_1(n)q^n
 = \frac{\pi^2}{3\om_2}E_2(\tau),
 \end{align*} 
 as desired.
 \end{proof}
 We now give an analytic argument for the equivariant nature of $p$ under the modular group and a closely related transformation behavior of the function $E_2$.

 \begin{prop}\label{prop:poly} Let $a,b,c,d\in \Z$ with $ad-bc=1$.
 Then 
 \begin{equation}\label {polyp}
p\biggl(\frac{a\tau+b}{c\tau+d}\biggr) =\frac{ap(\tau)+b}{cp(\tau)+d}
\end{equation}
and 
\begin{equation}\label{transE2}
E_2\biggl(\frac{a\tau+b}{c\tau+d}\biggr)=  (c\tau+d)^2E_2(\tau)-\frac{6i}{\pi}c(c\tau+d).
\end{equation}
 \end{prop}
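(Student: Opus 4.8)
The plan is to exploit the fact that the Weierstrass $\zeta$-function depends only on the lattice $\Gamma$ and not on the particular choice of generators, so that a unimodular change of basis transforms the quasi-periods in a controlled way. Given $a,b,c,d\in\Z$ with $ad-bc=1$, I set $\om_1'\coloneqq a\om_1+b\om_2$ and $\om_2'\coloneqq c\om_1+d\om_2$. Because the transformation is unimodular, $\om_1'$ and $\om_2'$ again generate the same lattice $\Gamma$ as in \eqref{latt}, and the new ratio is $\tau'\coloneqq\om_1'/\om_2'=(a\tau+b)/(c\tau+d)$. First I would record the bookkeeping facts that $\om_2'=(c\tau+d)\om_2\ne0$ and that $\im(\tau')=\im(\tau)/|c\tau+d|^2>0$, so that $(\om_1',\om_2')$ is again an admissible pair and the identities \eqref{eta1} and \eqref{eta2} apply to it verbatim.

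Next, since $\zeta(\,\cdot\,;\Gamma)$ is unchanged, the quasi-periods $\eta_1',\eta_2'$ attached to the new generators can be read off directly from the periodicity relation \eqref{defeta} and its integer-combination consequence $\zeta(u+k\om_1+n\om_2)=\zeta(u)+k\eta_1+n\eta_2$, giving
\begin{equation*}
\eta_1'=\zeta(u+\om_1')-\zeta(u)=a\eta_1+b\eta_2,\qquad
\eta_2'=\zeta(u+\om_2')-\zeta(u)=c\eta_1+d\eta_2.
\end{equation*}
Taking the ratio and dividing numerator and denominator by $\eta_2$ yields $p(\tau')=\eta_1'/\eta_2'=(a\,p(\tau)+b)/(c\,p(\tau)+d)$, which is exactly \eqref{polyp}. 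Since $p$ is meromorphic into $\bC$ by \eqref{pE2}, the identity, valid wherever $\eta_2\ne0$, extends over the discrete zero set of $\eta_2$ by continuity into $\bC$.

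For \eqref{transE2} I would compute $\eta_2'$ in two ways and compare. On one hand the transformation law above gives $\eta_2'=c\eta_1+d\eta_2$; substituting the explicit formulas \eqref{eta1} and \eqref{eta2} and using $\om_1=\tau\om_2$ expresses this as $\eta_2'=-\tfrac{2\pi i c}{\om_2}+\tfrac{\pi^2}{3\om_2}(c\tau+d)E_2(\tau)$. On the other hand, applying \eqref{eta2} to the admissible pair $(\om_1',\om_2')$ gives $\eta_2'=\pi^2E_2(\tau')/(3\om_2')$ with $\om_2'=(c\tau+d)\om_2$. Equating the two expressions, multiplying through by $3\om_2/\pi^2$ so that every factor of $\om_2$ cancels, and solving for $E_2(\tau')$ produces $E_2(\tau')=(c\tau+d)^2E_2(\tau)-\tfrac{6i}{\pi}c(c\tau+d)$, which is \eqref{transE2}.

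There is no serious obstacle here; the analytic content is packaged entirely in Proposition~\ref{prop:eta}. The only points requiring care are bookkeeping ones: verifying the orientation $\im(\tau')>0$ so that \eqref{eta1}--\eqref{eta2} legitimately apply to the primed system, and carrying out the substitutions $\om_1'=(a\tau+b)\om_2$ and $\om_2'=(c\tau+d)\om_2$ cleanly so that the $\om_2$-dependence drops out and the purely $\tau$-dependent identity \eqref{transE2} survives. (Alternatively, once \eqref{transE2} is established, one can re-derive \eqref{polyp} by inserting it into \eqref{pE2}; but the quasi-period route yields both transformation laws simultaneously and is the more transparent of the two.)
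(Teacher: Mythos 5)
Your proposal is correct and follows essentially the same route as the paper: a unimodular change of generators preserves the lattice, so the quasi-periods transform linearly via \eqref{defeta}, giving \eqref{polyp} by taking quotients, and \eqref{transE2} then comes from Proposition~\ref{prop:eta} applied to the primed pair. The only (immaterial) difference is that for \eqref{transE2} you equate two explicit expressions for $\eta_2'$ using \eqref{eta1} directly, while the paper writes $E_2(\tau')=3\eta_2'\om_2'/\pi^2$ and substitutes $p(\tau)=\tau-6i/(\pi E_2(\tau))$ from \eqref{pE2} --- the same computation rearranged.
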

  \begin{proof} 
We define $\om_1'=a\om_1+b\om_2$ and $\om_2'=c\om_1+d\om_2$.
  Then $\om_1',\om_2'$ generate the same lattice $\Gamma$ as  $\om_1, \om_2$,
  and so give rise to the same $\zeta$-function. The first quasi-period $\eta'_1$ associated with the pair $\om_1',\om'_2$ can be computed from the following identity in $u$:
  $$\eta_1'=\zeta(u+\om_1')-\zeta(u)=\zeta(u+a\om_1+b\om_2 )-\zeta(u)=a\eta_1+b\eta_2. $$
 Similarly, for the second quasi-period $\eta_2'$ associated with $\om_1',\om'_2$, we have $\eta_2'=c\eta_1+d\eta_2$. 
  Equation \eqref{polyp} then immediately follows by passing to quotients.
  
  By \eqref{eta2} we have 
  $$ E_2(\tau)=3\eta_2\om_2/\pi^2.
  $$
  So if we use the previous notation and write $\tau'\coloneqq \om'_1/\om_2'=(a\tau+b)/(c\tau+d)$, then we have 
  \begin{align*}E_2(\tau')&=3\eta'_2\om'_2/\pi^2=3(c\eta_1+d\eta_2)(c\om_1+d\om_2)/\pi^2\\&=(c p(\tau)+d)(c\tau+d)E_2(\tau)
  =(c (\tau-6i/(\pi E_2(\tau)))+d)(c\tau+d)E_2(\tau)\\ &= 
  (c\tau+d)^2E_2(\tau)-\tfrac{6i}{\pi}c(c\tau+d).\qedhere
 \end{align*}
 \end{proof}
 
  A curious consequence of Corollary~\ref{He} is that there are pairs $(\om_1, \om_2)$ for which $\eta_1$ or $\eta_2$ vanishes (they cannot both be zero for a given pair $(\om_1, \om_2)$). In this case, $\zeta$ is a periodic, but not doubly-periodic function. This seems to have been first noticed by Watson \cite{Wat} and  was also pointed out by Heins in \cite{He66}.

  Note that \eqref{polyp} implies that $\eta_1=0$ for a pair $(\om_1, \om_2)$ if and only if $\eta_2=0$ for the pair $(-\om_2,\om_1)$.  On the other hand, by \eqref{eta2} we 
  have $\eta_2=0$ if and only if $E_2(\tau)=0$. So studying values $(\om_1, \om_2)$ where $\eta_1$ or $\eta_2$ vanishes amounts to the same as finding all zeros of $E_2$. As Heins  alluded to  in \cite{He67}, it is easy to see that $E_2$ has a zero on the positive imaginary axis. Watson \cite{Wat} had actually computed this zero with great accuracy,  The location of all the zeros of $E_2$ in $\H$ was studied in depth in 
  \cite{WY14, IJT}.

%

 \section{Modular forms}\label{s:modforms}
 In order to proceed, we need some  facts about {\em modular forms}. This is all standard material going back to Fricke--Klein \cite{FK}  and beyond. A modern account can be found in \cite{Sch}. The survey by Zagier \cite{Zag} gives a fresh perspective with connections to many other areas.   All we need are some  basic fundamentals of the theory. Many of them are discussed in \cite{Cha} and all of them  in \cite[Chapters 1--3]{Sch}, for example.   We start with a definition.
 
 \begin{defi} A holomorphic function $f$ on $\H$ is called a {\em (holomorphic and inhomogeneous)  modular form
 of weight} $k\in \N$ if 
 it satisfies
 \begin{equation}\label{mod1}
 f\bigg(\frac{a\tau+b}{c\tau+d}\biggr)=(c\tau+d)^kf(\tau)
\end{equation}
 for all $\tau\in \H$ and $a,b,c,d\in \Z$ with $ad-bc=1$.
 Moreover, we require that 
  \begin{equation}\label{mod2}
f(\tau)=O(1) \text{ as }  \im(\tau)\to +\infty.
\end{equation}
  \end{defi}
  Note that a modular form is $1$-periodic: $f(\tau+1)=f(\tau)$. 
  So it will have a Fourier series expansion
  \begin{equation}\label{Four}
f(\tau)=\sum_{n\in \Z} a_ne^{2\pi i n\tau}=\sum_{n\in \Z} a_n q^n
 \end{equation}
  that converges for all $q\in \C$ with $|q|<1$. Condition \eqref{mod1} ensures that no terms with negative powers of $q$ occur in this series and so 
  \begin{equation}\label{modFour} f(\tau)=\sum_{n\in \N_0} a_n q^n.
  \end{equation}
  
  If $f$ is a function on $\H$, $k\in \N$, and $S\in \PSL_2(\Z)$ is as in \eqref{Smodgr}, then we can define an operation $f|_kS$ by setting $(f|_kS)(\tau)=f(S(\tau))(c\tau+d)^{-k}$.
  Condition \eqref{mod1} then simply becomes $(f|_kS)(\tau)=f(\tau)$.

  One can easily check that if $S,T\in  \PSL_2(\Z)$, then
  $$f|_k(S\circ T)=(f|_kS)|_kT. $$
 This implies that in order to verify \eqref{mod1}, it is enough to do this for generators 
 of $ \PSL_2(\Z)$, for example for $\tau\mapsto \tau+1$ and $\tau\mapsto -1/\tau$.
 We see that condition   \eqref{mod1} is equivalent to the identities
 $$ f(\tau+1)=f(\tau) \text { and } f(-1/\tau)=\tau^kf(\tau).$$ 
  
  We say that $f$ is a {\em modular function} if it is meromorphic on $\H$
  and satisfies $f\circ S=f$ for all $S\in \PSL_2(\Z)$. Moreover, we require that that for some $N\in \N_0$ we have
 \begin{equation} \label{funcmodbd} f(\tau)=O(q^{-N}) \text{ as }  \im(\tau)\to +\infty.
 \end{equation}
A modular function $f$ is $1$-periodic and has a Fourier expansion as in \eqref{Four} converging
 if $|q|>0$ is small enough.
Condition \eqref{funcmodbd} ensures that in this  Fourier expansion only finitely many non-zero terms with negative $n$ occur. 

Note that if $g$ and $h\ne 0$ are modular forms of the same weight, then $f=g/h$ is a modular function. If $g$ and $h$ are modular forms of weights $k$ and $l$, respectively, then $gh$ is a modular form of weight 
$k+l$. 

Let $f$ be a modular form of weight $k$. Then we can pass to an associated 
{\em homogeneous modular form} $\widetilde f$ depending on two variables 
$\om_1$ and $\om_2$ (satisfying our standing assumptions) by setting
$$ \widetilde f(\om_1,\om_2)= \biggl(\frac{2\pi}{\om_2}\biggr)^k f(\om_1/\om_2)= \biggl(\frac{2\pi}{\om_2}\biggr)^k f(\tau). $$
The incorporation of the factor $2\pi$ here has some advantages. Then $ \widetilde f$  is a homogeneous function of degree $-k$ in the sense that 
$$   \widetilde f(t\om_1,t\om_2)=t^{-k}  \widetilde f(\om_1,\om_2) $$
for $t\in \C^*$. Moreover, \eqref{mod1}  is equivalent to 
$$ \widetilde f(a\om_1+b\om_2, c\om_1+d\om_2)= \widetilde f(\om_1, \om_2).$$
So the transformation behavior of a homogeneous modular form becomes more transparent at the cost of having to deal with a function of two variables.

The (inhomogeneous and holomorphic) modular forms of a given weight $k\in \N$ obviously form a vector space $\mathcal{M}_k$ over $\C$. It immediately follows from the definitions that there are no non-trivial modular forms of odd weight (indeed, change the signs of $a,b,c,d$ in \eqref{mod1}). 
If $k$ is even, then the following well-known fact gives the dimension of  $\mathcal{M}_k$ for 
even $k$  (see \cite[Theorem~18, p.~47]{Sch}). The fact that 
$\mathcal{M}_k$ is finite-dimensional is probably the single most important fact in the theory of modular forms.  
\begin{prop}  For even $k\in \N$ we have 
$$\dim \mathcal{M}_k=   \left\{ \begin{array} {cl} \lfloor k/12 \rfloor & \text{for 
$k\equiv 2$  \textnormal {mod} $12$,}\\ &\\
 \lfloor k/12 \rfloor+1&\text{for $k\not \equiv 2$  \textnormal {mod}  $12$.}  \end{array}
 \right.
$$
\end{prop}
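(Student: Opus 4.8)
The plan is to deduce the dimension formula from the \emph{valence formula} (the ``$k/12$ formula''), which for a nonzero $f\in\mathcal{M}_k$ asserts that
\begin{equation*}
\operatorname{ord}_\infty(f)+\tfrac12\operatorname{ord}_i(f)+\tfrac13\operatorname{ord}_\rho(f)+\sum_{[\tau]}\operatorname{ord}_\tau(f)=\frac{k}{12},
\end{equation*}
where the last sum runs over the $\PSL_2(\Z)$-orbits of points in $\H$ distinct from those of $i$ and $\rho$. I would prove this by integrating $\frac{1}{2\pi i}\,f'/f$ around the boundary of the standard fundamental domain $\{\tau\in\H:|\re(\tau)|\le 1/2,\ |\tau|\ge 1\}$, truncated at a large height and indented by small circular arcs around the elliptic corners $i$, $\rho$, $\rho-1$ and around any zeros of $f$ lying on the boundary. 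The two vertical sides cancel by the periodicity $f(\tau+1)=f(\tau)$; the top segment contributes $-\operatorname{ord}_\infty(f)$ through the $q$-expansion; the substitution $\tau\mapsto-1/\tau$, together with $f(-1/\tau)=\tau^kf(\tau)$, pairs the two halves of the bottom arc and produces the term $k/12$; and the small indentations at $i$ and $\rho$ contribute $-\tfrac12\operatorname{ord}_i(f)$ and $-\tfrac13\operatorname{ord}_\rho(f)$ because of the interior angles $\pi$ at $i$ and $2\pi/3$ at $\rho$ (combining its two equivalent corners). The argument principle then yields the displayed identity.

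Granting the valence formula, the rest is algebraic. Every term on the left is a nonnegative rational with the first three having denominators $1$, $2$, $3$, so $k/12\ge 0$ already forces $\mathcal{M}_k=0$ for $k<0$. A nonzero weight-$0$ form has empty zero divisor, hence is constant, giving $\dim\mathcal{M}_0=1$; and $k/12=1/6$ admits no representation $a+b/2+c/3$ with $a,b,c\in\N_0$, giving $\dim\mathcal{M}_2=0$. (The same enumeration in fact pins down the zero divisor, and hence the dimension, of $\mathcal{M}_k$ for each small weight, showing that two forms of equal low weight differ by a constant.)

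For the inductive step I would introduce the Eisenstein series $E_4,E_6$ of weights $4$ and $6$ and the discriminant $\Delta=(E_4^3-E_6^2)/1728$, a weight-$12$ form whose constant Fourier coefficient vanishes; applying the valence formula to $\Delta$ (so $k/12=1$) shows that $\Delta$ has a simple zero at $\infty$ and no zero on $\H$. For even $k\ge 4$ let $S_k\subseteq\mathcal{M}_k$ be the cusp forms (vanishing constant term). The constant-term map $\mathcal{M}_k\to\C$ has kernel $S_k$ and is onto, witnessed by the weight-$k$ Eisenstein series $E_k$, so $\dim\mathcal{M}_k=\dim S_k+1$. Multiplication by $\Delta$ is an isomorphism $\mathcal{M}_{k-12}\xrightarrow{\ \sim\ }S_k$: it is injective since $\Delta\not\equiv 0$, and surjective since for $g\in S_k$ the quotient $g/\Delta$ is holomorphic on $\H$ (as $\Delta$ has no zero there) and $O(1)$ at $\infty$ (as $\operatorname{ord}_\infty(g)\ge 1=\operatorname{ord}_\infty(\Delta)$). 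Hence $\dim\mathcal{M}_k=\dim\mathcal{M}_{k-12}+1$ for all even $k\ge 4$, with the convention $\mathcal{M}_m=0$ for $m<0$. Feeding the base values $\dim\mathcal{M}_0=1$ and $\dim\mathcal{M}_2=0$ into this recursion and tracking $k$ modulo $12$ produces exactly the two cases of the claimed formula: the residue class $k\equiv 2$ lags by one precisely because $\dim\mathcal{M}_2=0$, while the classes $k\equiv 0,4,6,8,10$ start from the value $1$.

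The main obstacle is the valence formula itself, and within it the bookkeeping at the elliptic corners and along the bottom arc: one must correctly account for the stabilizer orders of $i$ and $\rho$ (equivalently, the interior angles $\pi$ and $2\pi/3$) to extract the fractional contributions $\tfrac12$ and $\tfrac13$, and carefully pair the two arcs interchanged by $\tau\mapsto-1/\tau$ to obtain the $k/12$ term. Once this contour computation is in place, the dimension count reduces to the routine induction described above.
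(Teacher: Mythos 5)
Your proposal is correct, but note that on this point there is nothing in the paper to compare against: the paper states the dimension formula without proof, quoting it from \cite[Theorem~18, p.~47]{Sch}. What you have written is essentially the standard proof contained in that reference (and in Serre's \emph{Course in Arithmetic}): the valence formula via a contour integral of $\tfrac{1}{2\pi i}f'/f$ over the truncated and indented fundamental domain; the base cases $\dim\mathcal{M}_0=1$ and $\dim\mathcal{M}_2=0$ by enumerating solutions of $a+\tfrac{b}{2}+\tfrac{c}{3}=\tfrac{k}{12}$ in nonnegative integers; the splitting $\dim\mathcal{M}_k=\dim S_k+1$ via the constant-term map for even $k\ge4$; and the isomorphism $\mathcal{M}_{k-12}\to S_k$ given by multiplication by $\Delta$, which yields the recursion $\dim\mathcal{M}_k=\dim\mathcal{M}_{k-12}+1$ and, with the two base values, exactly the two cases of the formula. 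Your bookkeeping at the corners (angle $\pi$ at $i$, total angle $2\pi/3$ at the identified pair $\rho$, $\rho-1$) and the pairing of the bottom arcs under $\tau\mapsto-1/\tau$ producing $k/12$ are the correct ingredients. Two compressed steps deserve a line each if written out: first, ``empty zero divisor, hence constant'' in weight $0$ is not a valid implication on its own; instead apply the valence formula to $f-f(i)\in\mathcal{M}_0$, or note that $f$ descends to a holomorphic function on the compact modular curve. Second, surjectivity of the constant-term map requires, for \emph{every} even $k\ge4$, a form in $\mathcal{M}_k$ with nonvanishing constant term; this is the Eisenstein series $G_k$, and its absolute convergence and modularity for even $k\ge4$ should be invoked (the paper only works out $G_4$ and $G_6$ explicitly). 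Incidentally, for the key fact that $\Delta$ is zero-free on $\H$ with a simple zero at the cusp, you could equally cite the product formula \eqref{Dprod} from the paper in place of your valence-formula computation; the two routes are interchangeable here.
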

In particular, there are no non-trivial modular forms of weight $2$. Moreover, the spaces of modular forms of weight $4$, $6$, and $8$ are all one-dimensional and hence spanned by
any non-trivial form in these spaces. The latter fact is all we need in the following.

To obtain non-trivial modular forms in $\mathcal{M}_4$ and $\mathcal{M}_6$,
we consider the {\em Eisenstein series} $G_k$ defined for even 
 $k\ge 4$ as 
 $$G_k(\tau)=\sum_{m,n\in \Z}\phantom{}^{\!\!\!\!'} \,\,\frac{1}{(m\tau+n)^k}. $$
  Here the prime on the sum means that the term with 
 $m=n=0$ should be omitted from the summation.

The series representing the function $G_k$, $k\ge 4$, converges absolutely and locally uniformly for $\tau\in \H$; so $G_k$ is a holomorphic function of $\tau\in \H$.  Actually, $G_k$ is a modular form of weight $k$. Indeed, it is immediate to see that $G_k$ has the right transformation behavior as in \eqref{mod1}. Moreover, one can explicitly 
obtain a Fourier expansion of $G_k$ as in \eqref{modFour}. This can be seen by a computation similar to the proof of 
Proposition~\ref{prop:eta}  where one uses an identity obtained from \eqref{sin} by differentiating $(k-2)$-times. This is  standard and the details can be found in \cite[Chapter~3]{Sch}, for example. We only need the result for $k=4$ and $k=6$. 
 Namely, we have 
$$ G_4=\frac{\pi^4}{45}E_4,\quad G_6=\frac{2\pi^6}{945}E_6, $$
where
  $$E_4=1+240 \sum_{n=1}^\infty  \frac{n^3q^n}{1-q^n}
 = 1+240 \sum_{n=1}^\infty \sigma_3(n)q^n, $$
  $$E_6=1-504 \sum_{n=1}^\infty  \frac{n^5q^n}{1-q^n}=
  1-504 \sum_{n=1}^\infty \sigma_5(n)q^n. $$
  
In particular, $G_4$, or equivalently $E_4$, is a modular form spanning $\mathcal{M}_4$, and
$G_6$ or $E_6$ are  modular forms each spanning $\mathcal{M}_6$. The space 
$\mathcal{M}_8$ is spanned by $E_4^2$.

While $E_2$ (as defined in \eqref{E2})  is not a 
modular form, it has a transformation behavior that is closely related.   
Indeed, each function $E_k$, $k=2,4,6$  is  $1$-periodic, that is, for $\tau\in \H$ it satisfies
 $$ E_k(\tau+1)=E_k(\tau);$$
 moreover, we have
 \begin{align} E_2(-1/\tau)&=\tau^2E_2(\tau)-\tfrac{6 i}{\pi}  \tau,\label{transE2'}\\
 E_4(-1/\tau)&=\tau^4E_4(\tau), \label{transE4}\\
 E_6(-1/\tau)&=\tau^6E_6(\tau). \label{transE6}
 \end{align}
 The first equation follows from \eqref{transE2},   while the last two equations follow from the fact that $E_4$ and $E_6$ are 
modular forms of weight $4$ and weight $6$, respectively.
Due to its simple transformation behavior,  $E_2$ is   called a  {\em quasi-modular form of weight} $2$.  


If, as before, $\rho=\frac{1}{2}(1+i\sqrt3)$, then
$$-1/ \rho=-\tfrac{1}{2}(1-i\sqrt3)=\rho-1,$$
and so 
$$ \rho^4E_4(\rho)=E_4(-1/ \rho)=E_4(\rho-1)=E_4(\rho).$$
Hence 
\begin{equation}\label{E4rho}
E_4(\rho)=0.
\end{equation}
It follows from the {\em valence formula}  for modular forms  (see \cite[Theorem 13, p.~41]{Sch}) that $\rho$ is the only zero of $E_4$ on 
$\H^*=\H\cup\Q\cup\{\infty\}$ up to equivalence under $\PSL_2(\Z)$. More precisely, we have
\begin{equation}\label{E4zeros}
E_4(\tau)=0 \text{ for } \tau\in \H^* \text { if and only if } \tau=S(\rho) \text{ for some } S\in \PSL_2(\Z).
\end{equation}

    If we insert $\tau=i$ in the transformation formula \eqref{transE6} for $E_6$, we see that 
$E_6(i)=-E_6(i)$, and so
\begin{equation}\label{E6i}
E_6(i)=0. 
\end{equation}
Again $i$ is the only zero of $E_6$ up to equivalence under $\PSL_2(\Z)$, but we will not need this fact.

We will derive   expressions for the derivates  $E'_k=dE_k/d\tau$. These formulas are often attributed to 
 Ramanujan, but they can be traced back (in different notation) to Fricke--Klein and beyond. 
 To absorb a factor $2\pi i$
that appears in these formulas, we introduce the abbreviation
 $$DF=\frac{1}{2\pi i} \frac{dF(\tau)}{d\tau}$$
 for ease of notation.  
 \begin{prop}\label{Ederi}
 The following identities  are valid:
\begin{align}  DE_2&=\tfrac{1}{12}(E_2^2-E_4),\label{E2der}\\
 DE_4&=\tfrac{1}{3}(E_2E_4-E_6),\label{E4der}\\
 DE_6&=\tfrac{1}{2}(E_2E_6-E_4^2)\label{E6der}. 
 \end{align}
 \end{prop}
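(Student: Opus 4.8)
The plan is to exploit the fact that differentiation nearly preserves modularity, with the quasi-modular form $E_2$ measuring the discrepancy. The central observation I would isolate first is the following: if $f\in\mathcal{M}_k$ is a modular form of weight $k$, then
\[
g \coloneqq Df - \tfrac{k}{12}E_2 f \in \mathcal{M}_{k+2}.
\]
To prove this I would differentiate the modularity relation \eqref{mod1} in $\tau$ (using $\frac{d}{d\tau}\frac{a\tau+b}{c\tau+d} = (c\tau+d)^{-2}$), divide by $2\pi i$ to pass to the operator $D$, and then combine the resulting anomalous term with the non-modular term in the transformation law \eqref{transE2} of $E_2$. A short bookkeeping of the factors shows that the two anomalous contributions, both proportional to $c(c\tau+d)^{k+1}f(\tau)$, cancel exactly, leaving $g\bigl(\frac{a\tau+b}{c\tau+d}\bigr) = (c\tau+d)^{k+2}g(\tau)$. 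Holomorphy of $g$ on $\H$ and the bound $g=O(1)$ as $\im(\tau)\to+\infty$ are immediate from the Fourier expansions (note that $Df=\sum_{n\ge 1} n a_n q^n\to 0$ and $E_2 f$ is bounded), so indeed $g\in\mathcal{M}_{k+2}$.

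With this lemma in hand, \eqref{E4der} and \eqref{E6der} follow quickly. Applying it to $f=E_4$ (weight $4$) gives $DE_4-\tfrac13 E_2E_4\in\mathcal{M}_6$; since $\mathcal{M}_6=\C E_6$, this is a scalar multiple of $E_6$, and comparing the constant ($q^0$) Fourier coefficients---$DE_4$ has vanishing constant term, while $\tfrac13 E_2E_4$ and $E_6$ contribute the obvious constants---pins down the scalar as $-\tfrac13$, which is \eqref{E4der}. Identically, applying the lemma to $f=E_6$ (weight $6$) gives $DE_6-\tfrac12 E_2E_6\in\mathcal{M}_8=\C E_4^2$, and matching constant terms forces the coefficient $-\tfrac12$, hence \eqref{E6der}.

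The case of $E_2$ is the genuinely delicate one and the main obstacle, precisely because $E_2$ is only quasi-modular: the lemma above does not apply, and a naive substitution of ``weight $2$'' would produce the wrong coefficient. Instead I would argue directly that the combination $h\coloneqq DE_2-\tfrac{1}{12}E_2^2$ transforms with weight $4$. For this I would differentiate the quasi-modular law \eqref{transE2} to obtain the transformation of $DE_2$, square \eqref{transE2} to obtain that of $E_2^2$, and then verify that in the difference $h\bigl(\frac{a\tau+b}{c\tau+d}\bigr)$ all anomalous terms---those proportional to $c(c\tau+d)^3 E_2$ and to $c^2(c\tau+d)^2$---cancel, leaving $h\bigl(\frac{a\tau+b}{c\tau+d}\bigr)=(c\tau+d)^4 h(\tau)$. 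As before $h$ is holomorphic on $\H$ and $O(1)$ at infinity, so $h\in\mathcal{M}_4=\C E_4$; matching constant terms ($DE_2$ contributes $0$, $\tfrac{1}{12}E_2^2$ contributes $\tfrac{1}{12}$, and $E_4$ contributes $1$) forces $h=-\tfrac{1}{12}E_4$, which is exactly \eqref{E2der}. The only point requiring real care throughout is the cancellation of these anomalous terms, where the factors of $i$ and $\pi$ carried by the $-\frac{6i}{\pi}$ in \eqref{transE2} and by the $\frac{1}{2\pi i}$ in the definition of $D$ must be tracked precisely.
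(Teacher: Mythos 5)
Your proposal is correct and follows essentially the same route as the paper: subtract the appropriate multiple of $E_2$ (respectively $\tfrac1{12}E_2^2$), verify via the transformation law \eqref{transE2} that the anomalous terms cancel so that the combination is a modular form of weight $4$, $6$, or $8$, and then invoke one-dimensionality of these spaces together with matching of constant Fourier coefficients. The only cosmetic differences are that you check the transformation behavior for a general element of $\PSL_2(\Z)$ and isolate the Serre-derivative lemma up front, whereas the paper verifies only the generator $\tau\mapsto -1/\tau$ (periodicity being immediate) and records the general Serre-derivative fact as a remark after the proof.
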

 \begin{proof} To prove the first formula,  we consider the  function function $f=DE_2-\tfrac{1}{12}E_2^2$ on $\H$. 
 It is $1$-periodic and has a Fourier expansion of the form \eqref{modFour} as follows from computations with the underlying $q$-power series for $E_2$.  Moreover, the transformation behavior of $E_2$ gives 
\begin{align*}  \tau^{-2} E'_2(-1/\tau)&
=\frac{d}{d\tau}E_2(-1/\tau)=\frac{d}{d\tau} 
(\tau^2E_2(\tau)-\tfrac{6 i}{\pi } \tau)\\
&=2\tau E_2(\tau)+\tau^2E_2'(\tau) -\tfrac{6 i}{\pi }, 
 \end{align*}
 and so 
 $$DE_2(-1/\tau)=\tau^4 DE_2(\tau)-\tfrac{i}{\pi} \tau^3 E_2(\tau)-\tfrac{3 }{\pi ^2}\tau^2. 
 $$
 Hence 
\begin{align*}
f(-1/\tau)&=DE_2(-1/\tau)-\tfrac{1}{12}E_2(-1/\tau)^2\\
&=\tau^4 DE_2(\tau)-\tfrac{i}{\pi} \tau^3 E_2(\tau)-\tfrac{3}{\pi^2 }\tau^2 -\tfrac{1}{12}(\tau^2E_2(\tau)-\tfrac{6 i}{\pi}  \tau)^2\\
&=\tau^4f(\tau).
\end{align*} 
It follows that $f$ is a modular form of weight $4$. Since the  space of these forms has dimension $1$ and is spanned by 
$E_4$, there exists a constant $c\in \C$ such that 
$f=cE_4$. To determine this constant, we consider the 
leading coefficients of the $q$-expansions of  $f$ and $E_4$. We find that 
$$ f(\tau)= -\tfrac1{12} +O(q), \quad E_4=1+O(q), $$
and so $c=-\tfrac1{12}.$ The claim follows. 

 The other identities follow from similar considerations:
 one shows that $DE_4-\frac13 E_2E_4$ is a modular form of weight $-6$  and 
 $DE_6-\frac12 E_2E_6$ is a modular form of weight $-8$.
 Moreover, the spaces of these forms are spanned by 
 $E_6$ and $E_4^2$, respectively.  Again one determines the proportionality constants by considering $q$-expansions. 
 \end{proof} 
 To put the preceding argument into a more general perspective:
if $f$ is a modular form of weight $k=4, 6, \dots$, then one can show by computations similar to the ones in the proof of Proposition~\ref{Ederi} that 
its {\em Serre derivative} (see \cite[p.~48]{Zag})
$$\vartheta_kf\coloneqq Df-\tfrac{k}{12}E_2f$$
is a modular form of weight $k+2$.

We need two more auxiliary functions. The first one is
$$\Delta\coloneqq\frac{1}{1728}(E_4^3-E_6^2). $$
This is a modular form of weight $12$ (note that the notation in \cite{Sch, Cha} is different from ours). From Proposition~\ref{Ederi} it follows that 
\begin{align*} D  \Delta&=\tfrac{1}{1728}(D(E_4^3)-D(E_6^2))=\tfrac{1}{1728}(3E_4^2DE_4-2E_6DE_6)\\
&=\tfrac{1}{1728}(E_4^3-E_6^2)E_2= \Delta E_2. 
\end{align*} 
Based on this and the expansion of $E_2$ as a $q$-series,
it is easy  to derive the well-known formula
\begin{equation}\label{Dprod}
\Delta =q\prod_{n=1}^\infty (1-q^n)^{24} 
\end{equation} 
Indeed, both sides in this identity are holomorphic functions on $\H$ with the  same logarithmic derivatives, namely $2\pi i E_2$; so they must represent the same function up to a multiplicative constant. This constant is equal to $1$ as a comparison of the leading terms of the $q$-expansions shows. 

It immediately follows from \eqref{Dprod} that $\Delta$ has no zeros on $\H$ and that it takes positive real values for $\tau$ on the positive imaginary axis. In the following, we will consider various roots $\Delta^{1/k}$ of $\Delta$ for $k\in \N$.
Since $\Delta$ has no zeros on $\H$, these are holomorphic functions on $\H$. A priori, $\Delta^{1/k}$ is only defined up to multiplication by a $k$-th root of unity. We fix this ambiguity so that
$\Delta^{1/k}$ attains positive real values on the positive imaginary axis.

We need one more auxiliary function, namely  the $J$-{\em invariant} given as 
\begin{equation}\label{defJ}
J=E_4^3/(E_4^3-E_6^2)= \tfrac{1}{1728} E_4^3/\Delta. 
\end{equation}

This is a modular function. It is well known that $J$  maps the circular arc  triangle $T_0$ as in  \eqref{T0} conformally to the closed lower halfplane, considered 
as a circular arc triangle with vertices $0$, $1$, $\infty$ (see 
\cite[Chapter 6; Theorem 5, p.~90]{Cha}). 

At least the correspondence of vertices is easy to see:
the definition of $J$ immediately gives
$$J(\tau)=\tfrac{1}{1728}q^{-1}+O(1) \text{ as } \im(\tau)\to +\infty. $$ 
Hence $J(\infty)=\infty$ (understood in a limiting sense). Moreover, the definition of 
$J$ in combination with  \eqref{E4rho} and   \eqref{E6i}  gives 
$$ J(\rho)=0  \text{ and } J(i)=1. $$ 

By \eqref{defJ} we obtain a uniquely determined holomorphic third  root of $J$ by setting $$J^{1/3}\coloneqq\tfrac 1{12}E_4/\Delta^{1/3}.$$
Similarly, by \eqref{defJ} we have $J-1=\tfrac{1}{1728}E_6^2/\Delta$, which 
allows us to define the holomorphic function 
$$(J-1)^{1/2}\coloneqq\tfrac{1}{24 \sqrt 3}E_6/\Delta^{1/2}. $$
Note that these definitions fix the ambiguity of $J^{1/3}$  and $(J-1)^{1/2}$  in such a way that these functions take positive real values with  $\tau$ on the positive imaginary axis with $\im(\tau)$ large.

The previous formulas can be rewritten as follows 
$$E_4=12J^{1/3} \Delta^{1/3}, \quad E_6=24\sqrt3 (J-1)^{1/2}
\Delta^{1/2}. $$ 
We also have
\begin{align} DJ&= \tfrac{1}{1728} \biggl( \frac{3 E_4^2 D(E_4)}{ \Delta}- \frac{E_4^3D( \Delta)}{\Delta^2}\biggr) \label{eq:derJ}\\ &=
- \tfrac{1}{1728}E_4^2E_6/ \Delta=- 2\sqrt3 J^{2/3} (J-1)^{1/2} \Delta^{1/6} . \notag
 \end{align} 
 
 We want to introduce $J$ as a new variable instead of $\tau$ for some functions of $\tau$. The map $\tau\in \H\mapsto J(\tau)$ is locally injective 
 at all points not equivalent to $i$ or $\rho$ under the modular group. The latter points  are exactly those where $J$ takes on the values $1$ or $0$. Moreover, $J(\tau)$ attains  all values in $\bC$ except $\infty$. This means that $\tau=\tau(J)$ is locally well defined away from the points $0$, $1$, $\infty$ in the $J$-plane. Once we fix such a local branch $\tau=\tau(J)$, we can analytically continue it along any path in $\bC\setminus\{0,1,\infty\}$. In this  way, we obtain a multi-valued function on $\bC\setminus\{0,1,\infty\}$, whose branches differ by postcomposition with elements in the modular group $\PSL_2(\Z)$. 

We will now fix such a local branch $\tau=\tau(J)$ and introduce the following functions, considered as depending on the variable  $J$:  
\begin{align} \Om_1&= \tau\Delta^{1/12},\quad \Om_2= \Delta^{1/12},\label{Om}
\\
H_1&=\frac{1} { \Delta^{1/12}}(\tau E_2-\tfrac{6i}{\pi}), \quad  
  H_2=\frac{E_2} { \Delta^{1/12}} \label{H}.
 \end{align} 
 Note that by \eqref{pE2} we have 
 \begin{equation} \label{eq:Hrat} 
H_1(\tau)/H_2(\tau)=\tau-\frac{6i}{\pi E_2(\tau)}=p(\tau). 
\end{equation}
 The meaning of the expressions \eqref{Om} and \eqref{H}  becomes clearer if one transitions to homogeneous functions of the variables $\om_1$ and $\om_2$. See Section~\ref{s:rems} for more details.

\begin{lem}\label{OHderiv} For $k=1,2$ we have the following identities: 
\begin{align*}  \frac{d\Om_k}{dJ}&=-\tfrac{1}{24\sqrt3} J^{-2/3} (J-1)^{-1/2} H_k,\\
\frac{dH_k}{dJ}&= \tfrac{1}{2\sqrt3} J^{-1/3} (J-1)^{-1/2}  \Om_k.
\end{align*} 
\end{lem}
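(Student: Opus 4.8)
The strategy is to compute the derivatives $d\Om_k/dJ$ and $dH_k/dJ$ by applying the chain rule: I will first compute the $\tau$-derivatives $D\Om_k$ and $DH_k$ (using the normalization $D = \frac{1}{2\pi i}\,d/d\tau$ introduced before Proposition~\ref{Ederi}), and then divide by $DJ$, for which formula \eqref{eq:derJ} gives the closed expression
$$DJ = -2\sqrt3\, J^{2/3}(J-1)^{1/2}\Delta^{1/6}.$$
The key inputs are the Ramanujan identities \eqref{E2der}--\eqref{E6der} for $DE_2$, $DE_4$, $DE_6$, together with the relation $D\Delta = \Delta E_2$ and hence $D(\Delta^{1/12}) = \tfrac{1}{12}E_2\Delta^{1/12}$ (obtained by logarithmic differentiation, using that $\log\Delta^{1/12} = \tfrac1{12}\log\Delta$ and $D\Delta/\Delta = E_2$). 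I also record $D\tau = \tfrac{1}{2\pi i}$.

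For $\Om_2 = \Delta^{1/12}$ the computation is immediate: $D\Om_2 = \tfrac{1}{12}E_2\Delta^{1/12} = \tfrac1{12}H_2\Delta^{1/12}/\Delta^{1/12}\cdot\Delta^{1/12}$; more precisely $D\Om_2 = \tfrac{1}{12}E_2\Delta^{1/12} = \tfrac{1}{12}H_2\,\Delta^{1/6}$ after writing $E_2\Delta^{1/12} = H_2\Delta^{1/6}$ from the definition $H_2 = E_2/\Delta^{1/12}$. Dividing by $DJ$ then yields the stated formula for $d\Om_2/dJ$, with the powers of $J$ and $J-1$ organized so that the $\Delta^{1/6}$ factors cancel. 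For $\Om_1 = \tau\Delta^{1/12}$ I use the product rule: $D\Om_1 = (D\tau)\Delta^{1/12} + \tau\,D(\Delta^{1/12}) = \tfrac{1}{2\pi i}\Delta^{1/12} + \tfrac{1}{12}\tau E_2\Delta^{1/12}$. Recognizing the combination $\tau E_2 - \tfrac{6i}{\pi} = H_1\Delta^{1/12}$ (from the definition of $H_1$, noting $\tfrac{1}{12}\cdot\tfrac{6i}{\pi} = \tfrac{i}{2\pi} = \tfrac{1}{2\pi i}\cdot(-1)$, so the constant term reassembles correctly), I obtain $D\Om_1 = \tfrac{1}{12}H_1\Delta^{1/6}$, matching the $\Om_2$ case with $H_2$ replaced by $H_1$. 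Dividing by $DJ$ completes both $\Om_k$ formulas simultaneously.

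For the $H_k$ formulas the same scheme applies but the algebra is heavier, since one must differentiate $E_2$ via \eqref{E2der} and the factor $\tau E_2 - \tfrac{6i}{\pi}$ via the product rule. The critical cancellation is that the $E_2^2$ term produced by $DE_2 = \tfrac{1}{12}(E_2^2 - E_4)$ cancels against the contribution coming from $D(\Delta^{-1/12}) = -\tfrac{1}{12}E_2\Delta^{-1/12}$, so that $DH_k$ ends up proportional to $E_4\,\Om_k/\Delta^{1/12}$ with no surviving $E_2$-dependence. Rewriting $E_4 = 12 J^{1/3}\Delta^{1/3}$ and dividing by $DJ$ then produces the clean powers $J^{-1/3}(J-1)^{-1/2}$. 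I expect \textbf{this cancellation of the quasi-modular $E_2$-terms} to be the main obstacle: it is exactly the mechanism that makes $\Om_k$ and $H_k$ into a genuine fundamental system for a second-order linear ODE in $J$ (the $E_2$-dependence, which obstructs modularity, must wash out under the change of variable to $J$). The bookkeeping of the constants $\tfrac{1}{2\pi i}$, $\tfrac{6i}{\pi}$, and the fractional powers of $J$, $J-1$, $\Delta$ is routine but must be carried out carefully so that both the numerical coefficients $-\tfrac{1}{24\sqrt3}$ and $\tfrac{1}{2\sqrt3}$ and the exponents come out as stated.
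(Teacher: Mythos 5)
Your proposal is correct and follows essentially the same route as the paper's proof: compute $D\Om_k$ and $DH_k$ via the Ramanujan identity $DE_2=\tfrac1{12}(E_2^2-E_4)$ and $D\Delta=\Delta E_2$, then divide by $DJ=-2\sqrt3\,J^{2/3}(J-1)^{1/2}\Delta^{1/6}$, with the $E_2^2$-cancellation in $DH_k$ (leaving $-\tfrac1{12}E_4/\Delta^{1/12}=-J^{1/3}\Delta^{1/4}$, times $\tau$ for $k=1$) being exactly the step the paper carries out. The constants and exponents in your sketch check out, so fleshing out the bookkeeping reproduces the paper's argument.
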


\begin{proof} We have 
$$
 D\Om_1=\tfrac1{12}  \Delta^{1/12} \tau E_2 
-\tfrac{i}{2\pi}  \Delta^{1/12}, \quad D\Om_2=\tfrac1{12}  \Delta^{1/12} E_2,
$$ 
Hence 
\begin{align*}  \frac{d\Om_2}{dJ}&= D\Om_2/DJ \\
&=- \tfrac{1}{24\sqrt3} E_2 J^{-2/3} (J-1)^{-1/2}/\Delta^{1/12}\\
&=-\tfrac{1}{24\sqrt3} J^{-2/3} (J-1)^{-1/2} H_2. 
\end{align*} 

Similarly,
 \begin{align*}\frac{d\Om_1}{dJ}&= D\Om_1/DJ=
  \frac{\tfrac1{12}  \Delta^{1/12} \tau E_2 
-\tfrac{i}{2\pi}  \Delta^{1/12}}
{-2\sqrt3 J^{2/3} (J-1)^{1/2} \Delta^{1/6}}\\
&=-\tfrac{1}{24\sqrt3}  J^{-2/3} (J-1)^{-1/2} H_1.
 \end{align*} 
 The first identity follows. 
 
 For the second identity note that 
\begin{align*} DH_2&=
 D(E_2/\Delta^{1/12})= DE_2/\Delta^{1/12}-\tfrac1{12} E_2^2/\Delta^{1/12}\\
 &=-\tfrac1{12} E_4/\Delta^{1/12}=
 -J^{1/3}  \Delta^{1/4} 
  \end{align*} and so 
  $$\frac{dH_2}{dJ}= DH_2/DJ=
  \frac{-J^{1/3} \Delta^{1/4}}{- 2\sqrt3 J^{2/3} (J-1)^{1/2}\Delta^{1/6}}= \tfrac{1}{2\sqrt3}
  J^{-1/3} (J-1)^{-1/2} \Om_2.
  $$ 
 Similarly, 
\begin{align*}  DH_1&=
 D(\tau E_2-\tfrac{6i}{\pi})/\Delta^{1/12}-\tfrac1{12} (\tau E_2-\tfrac{6i}{\pi}) E_2/\Delta^{1/12}\\
 &= (-\tfrac{i}{2\pi}E_2+\tfrac{1}{12} \tau(E_2^2-E_4))/\Delta^{1/12}-\tfrac1{12} (\tau E_2-\tfrac{6i}{\pi}) E_2/\Delta^{1/12}\\
 &=-\tfrac1{12}\tau E_4/\Delta^{1/12}=-\tau J^{1/3} \Delta^{1/4}=\tau DH_2.
  \end{align*} 
  It  follows that 
 \begin{align*}\frac{dH_1}{dJ}&= DH_1/DJ=\tau DH_2/DJ
  = \tfrac{1}{2\sqrt3}
  J^{-1/3} (J-1)^{-1/2}\tau  \Om_2\\ & = \tfrac{1}{2\sqrt3}
  J^{-1/3} (J-1)^{-1/2}\Om_1.\qedhere
  \end{align*}
   \end{proof} 
   
  \smallskip 
\begin{prop}\label{prop:hypp}
The functions $H_1$ and $H_2$ considered as functions of 
$J$ form a fundamental system of solutions of 
the  hypergeometric differential equation  
\begin{equation} \label{etaODE}
w''(z)+\frac{5z-2}{6z(z-1)}w'(z)+\frac{1}{144z(z-1)}w(z)=0.
 \end{equation} 
\end{prop}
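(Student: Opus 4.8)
The plan is to treat the two identities of Lemma~\ref{OHderiv} as a first-order linear system in the unknowns $\Om_k$ and $H_k$, with $z=J$ as independent variable, and to eliminate $\Om_k$ so as to obtain a single second-order equation for each $H_k$. Writing $z$ for $J$, the second identity reads $H_k'=\tfrac{1}{2\sqrt3}z^{-1/3}(z-1)^{-1/2}\Om_k$, which I can solve for $\Om_k=2\sqrt3\,z^{1/3}(z-1)^{1/2}H_k'$.

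First I would differentiate this second identity once more in $z$. By the product rule this splits into a term in which the factor $z^{-1/3}(z-1)^{-1/2}$ is differentiated and multiplied by $\Om_k$, and a term in which that factor multiplies $\Om_k'$. Into the first term I substitute the expression $\Om_k=2\sqrt3\,z^{1/3}(z-1)^{1/2}H_k'$ just obtained; into the second I substitute the first identity of Lemma~\ref{OHderiv}, namely $\Om_k'=-\tfrac{1}{24\sqrt3}z^{-2/3}(z-1)^{-1/2}H_k$. After the fractional powers of $z$ and $z-1$ cancel, the coefficient of $H_k'$ collapses to $-\tfrac{1}{3z}-\tfrac{1}{2(z-1)}$ and the coefficient of $H_k$ collapses to $-\tfrac{1}{144z(z-1)}$. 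Combining $-\tfrac{1}{3z}-\tfrac{1}{2(z-1)}$ over the common denominator $6z(z-1)$ gives exactly $-\tfrac{5z-2}{6z(z-1)}$, so each $H_k$ satisfies \eqref{etaODE}.

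It then remains to check that $H_1$ and $H_2$ are linearly independent, so that they genuinely form a fundamental system. I would verify this by computing the Wronskian $W=H_1H_2'-H_2H_1'$: using the second identity of Lemma~\ref{OHderiv} to replace each $H_k'$, the common factor $\tfrac{1}{2\sqrt3}z^{-1/3}(z-1)^{-1/2}$ factors out and leaves $H_1\Om_2-H_2\Om_1$. Inserting the explicit formulas \eqref{Om} and \eqref{H}, the $\Delta^{1/12}$ factors cancel and the two $\tau E_2$ contributions cancel, leaving simply $-\tfrac{6i}{\pi}$. Hence $W=-\tfrac{i\sqrt3}{\pi}z^{-1/3}(z-1)^{-1/2}\ne 0$, consistent with Abel's formula applied to \eqref{etaODE}. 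Alternatively, independence is immediate from \eqref{eq:Hrat}, since $H_1/H_2=p(\tau)$ is non-constant.

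The whole argument is thus a mechanical elimination, and I expect no conceptual obstacle. The only step demanding attention is the bookkeeping of the fractional exponents $-\tfrac13$ and $-\tfrac12$ of $z$ and $z-1$ when applying the product rule in the differentiation step, together with confirming that the resulting first-order coefficient matches the partial-fraction decomposition $\tfrac{5z-2}{6z(z-1)}=\tfrac{1}{3z}+\tfrac{1}{2(z-1)}$.
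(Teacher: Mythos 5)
Your proposal is correct and follows essentially the same route as the paper: differentiating the second identity of Lemma~\ref{OHderiv} and substituting back via the first identity to eliminate $\Om_k$, with the same coefficient bookkeeping yielding \eqref{etaODE}. Your explicit Wronskian computation $W=-\tfrac{i\sqrt3}{\pi}z^{-1/3}(z-1)^{-1/2}$ is a pleasant extra check, but your fallback argument---that $H_1/H_2=p$ is non-constant by \eqref{pE2}---is exactly how the paper concludes linear independence.
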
 
As mentioned in the introduction, this is not new. It follows from the methods of Fricke--Klein and was explicitly stated in \cite[formula (9), p.~326]{Fr}. To derive
\eqref{etaODE},  Fricke used homogeneous modular forms and an associated differentiation process in contrast to our approach based on inhomogeneous modular forms (see Section~\ref{s:rems} for more discussion). 
\begin{proof} 
We use the system of equations from  Lemma~\ref{OHderiv}.
For simplicity we write $H=H_k$ and $\Om=\Om_k$ for $k=1,2$. 
Then
\begin{align*}
\frac{d^2H}{dJ^2}& =   \frac{d}{dJ}\biggl( \frac{1}{2\sqrt3}  J^{-1/3} (J-1)^{-1/2}  \Om\biggr)\\
&=\biggl(-\frac {1}{3J} -\frac {1}{2(J-1)}\biggr) \frac{dH}{dJ}+
\frac{1}{2\sqrt3}  J^{-1/3} (J-1)^{-1/2}  \frac{d\Om}{dJ}\\
&=-\frac{5J-2}{6J(J-1)}\frac{dH}{dJ}-\frac{1}{144J(J-1)}H. 
\end{align*} 
It follows that $H_1$ and $H_2$ are solutions of the above differential equation.  Since the ratio $p=H_1/H_2$ of these functions is non-constant by \eqref{pE2},  they are linearly independent  and  form a fundamental  system of solutions. 
   \end{proof} 
The general form of the hypergeometric differential equation  (see \cite [Part 7, Chapter 2]{Car} or \cite[Chapter 8]{Bie}) is 
\begin{equation}\label{hyperODE}
 w''(z)+\frac{z(1+\alpha+\beta)-\ga}{z(z-1)}w'(z)+\frac{\alpha\beta}{z(z-1)}w(z)=0. 
\end{equation}

So the parameters in \eqref{etaODE} are $\alpha=\beta=-1/12$ and $
\ga=1/3$. This implies that the ratio $p=H_1/H_2$ is a conformal map of the closed upper or lower $J$-halfplane  considered as circular arc triangles with vertices $0$, $1$, $\infty$ onto a circular arc triangle with angles $\la\pi$,  $\mu\pi$, $\nu\pi$, respectively, where   
$$ \la=1-\ga=2/3,\quad \mu=\ga-\alpha-\beta=1/2,\quad \nu=\alpha-\beta=0.$$ 
This is thoroughly explained in \cite[Part 7, Chapter 2]{Car}  or \cite[pp.~252--254]{Bie}.  

Before we proceed,  we record a simple geometric fact.
\begin{lem} \label{lem:triuniq}
Let $T\sub \bC$  be a circular arc triangle. If $T$ has angles $0, \pi/2, \pi/3$, then it is M\"obius equivalent
to  $T_0$ or $\overline {T_0}$. 
 If $T$ has angles $0, \pi/2, 2\pi/3$,  then it is M\"obius equivalent to $T_1$ or $\overline {T_1}$. 
\end{lem}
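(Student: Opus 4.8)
The plan is to reduce an arbitrary such triangle to a normal form by a M\"obius transformation. I would begin with the angle-$0$ vertex, sending it to $\infty$. Two circular arcs meeting at angle $0$ are tangent, and two circles passing through $\infty$ are lines; hence the two sides adjacent to this vertex become two distinct parallel lines in $\C$. A further affine map $z\mapsto az+b$ fixes $\infty$ and preserves parallelism, so I can rotate these lines to be vertical, scale the gap between them to $1/2$, and translate so that they become exactly $\{\re(z)=0\}$ and $\{\re(z)=1/2\}$. The third side is then an arc of some circle $C$ meeting these two lines at the remaining two vertices.

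The crux is that the angle data now pin down $C$. Suppose the interior angles at the vertices on $\{\re(z)=0\}$ and $\{\re(z)=1/2\}$ equal $\pi/2$ and $\pi/3$. A circle meets a line orthogonally precisely when the line passes through its center, so the right angle forces the center of $C$ onto $\{\re(z)=0\}$; consequently the distance from that center to $\{\re(z)=1/2\}$ is $1/2$. For a circle of radius $r$ whose center lies at distance $\delta$ from a line, the (acute) intersection angle $\phi$ satisfies $\cos\phi=\delta/r$; with $\phi=\pi/3$ and $\delta=1/2$ this yields $r=1$, and a vertical translation $z\mapsto z+it$ then moves the center to the origin. Thus an orientation-preserving M\"obius transformation carries the three sides of $T$ onto the fixed triple of circles $\{\re(z)=0\}$, $\{\re(z)=1/2\}$, $\{|z|=1\}$.

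It remains to identify the image region. These three circles partition $\bC$ into finitely many pieces, and since each is symmetric under $z\mapsto\overline z$, both $T_0$ and its conjugate $\overline{T_0}$ occur among them. A direct inspection of the configuration shows that $T_0$ and $\overline{T_0}$ are the only two regions bounded by a single arc of each circle with interior angles $0,\pi/2,\pi/3$; since an orientation-preserving map cannot exchange these two M\"obius classes, $T$ is equivalent to exactly one of them, which proves the first assertion. For the second, I would rerun the normalization with the angle at the vertex on $\{\re(z)=1/2\}$ equal to $2\pi/3$: the acute intersection angle is then $\pi-2\pi/3=\pi/3$, so the same computation again gives $r=1$ and the identical triple of circles, and the two regions with angles $0,\pi/2,2\pi/3$ are precisely $T_1$ and $\overline{T_1}$. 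The substance of the argument is the forcing computation above; the only real care needed is the final inspection of the three-circle configuration to confirm that no region other than the claimed ones carries the prescribed angles.
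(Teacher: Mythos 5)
Your normalization argument for the first assertion is essentially the paper's own proof: the paper likewise sends the cusp to $\infty$, arranges the two adjacent sides as vertical rays and the opposite side on the unit circle, and your computation $\cos\phi=\delta/r$ just makes explicit what the paper compresses into ``it is then easy to see.'' For the second assertion you genuinely diverge: the paper reduces angles $0,\pi/2,2\pi/3$ to the first case by passing to the complementary triangle inside the lune, whereas you rerun the forcing computation with the supplementary crossing angle $\pi/3$. That is legitimate and arguably more uniform, since it shows directly that both triangle types live on the same triple of circles.

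The gap is in your final identification step. Knowing that the three sides of $T$ lie on the circles $\{\re z=0\}$, $\{\re z=1/2\}$, $\{|z|=1\}$ does \emph{not} imply that $T$ is one of the pieces into which these circles partition $\bC$: a side may be a ``long'' arc, so that $\inte(T)$ meets the circles. Concretely, $R\coloneqq T_0\cup\{z\in\C: \re z\ge 1/2,\ |z|\ge 1\}$ is a closed Jordan region bounded by one arc of each circle (the ray $\{iy: y\ge 1\}$, the arc of the unit circle from $i$ through $1$ to $\overline\rho$, and the ray $\{\tfrac12+iy: y\le -\tfrac{\sqrt3}{2}\}$), and its interior contains part of the line $\{\re z=1/2\}$; it escapes your inspection of partition pieces. (It has angle $\pi$ at $\infty$, because its two straight sides run to the tangency point in opposite directions, so it does not contradict the lemma --- but ruling such regions out is exactly the content of the missing check.) Worse, for the second assertion your method as stated is inconsistent with its own conclusion: $T_1$ and $\overline{T_1}$ are \emph{not} pieces of the partition --- $T_1$ is the closure of the union of two pieces, and $\inte(T_1)$ contains the open arc of the unit circle between $i$ and $\rho$ --- so an inspection of the partition pieces would find no region at all with angles $0,\pi/2,2\pi/3$. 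The repair is the finite enumeration over boundary-arc choices rather than over pieces: the angle $0$ at $\infty$ forces the two straight sides to be rays pointing in the \emph{same} vertical direction (a mixed pair gives angle $\pi$ there, as in $R$ above), and with both rays pointing up the angle conditions at the finite vertices quickly force $T=T_0$ (resp.\ $T=T_1$), the downward case giving $\overline{T_0}$ (resp.\ $\overline{T_1}$). Note that the paper's outline quietly encodes this point by normalizing both rays to be ``parallel to the \emph{positive} imaginary axis''; your write-up needs the corresponding observation made explicit.
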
 
Here we say that two circular arc triangles $X$ and $Y$  are {\em M\"obius equivalent} if there exists a M\"obius transformation $S$ (i.e., a biholomorphism on $\bC$) that gives a conformal map  between $X$ and $Y$ (as defined in the introduction).  Recall that $T_0$ and $T_1$ are defined in \eqref{T0} and \eqref{T1}, respectively.  

\begin{proof} The first part is true  in even  greater generality and its proof  is fairly standard 
(see, for example, \cite[Section 394 and Figure 83] {Car}); so we will only give an outline of the argument.

Suppose  $T\sub \bC$ has angles $0, \pi/2, \pi/3$ at its vertices $a,b,c\in \partial T$, respectively. By applying an auxiliary M\"obius transformation to $T$, we may assume that $a=\infty$.
Then the side  of $T$ containing $b$ and $c$ must be an
arc on a circle $C$ and cannot be a segment on a line (in the latter case the angles at $b$ and $c$ would  have to add up to $\pi$ which they do not). 
By applying another M\"obius transformation (namely, a Euclidean similarity), we may also assume that
$C$ is the unit circle and that the two sides  of $T$ containing $a=\infty, b$ and
 $a,c$, respectively, are rays parallel to the positive imaginary axis. It is then easy to see 
 that the only possibilities are $b=i$, $c=\rho$, in which case $T=T_0$, or $b=i$, $c=-\overline \rho$
 in which case $T$ is equal to the reflection image of $T_0$ in the imaginary axis, and hence M\"obius equivalent to $\overline {T_0}$.

If  $T\sub \bC$ has angles $0, \pi/2, 2\pi/3$ at its vertices $a,b,c\in \partial T$, respectively, then the quickest way to verify the statement is to reduce it to the first part. 
For this we consider the unique circular arc triangle $\widetilde T$ with 
the same vertices $a,b,c$ and angles $0, \pi/2, \pi/3$, respectively, so that $T$ is complementary to $\widetilde T$ in the sense discussed in the introduction. Then $\widetilde T\cup T$ forms  a 
lune as in Figure~\ref{fig:comple}. By what we have seen, we can find a M\"obius transformation $S$  that gives a  conformal map of $\widetilde T$ onto $T_0$ or $\overline{T_0}$. 
Then $S$ is a conformal map of $T$ onto $\overline {T_1}$ or $T_1$ and the statement follows.  \end{proof}  
The different cases in the previous lemma can easily be distinguished by taking boundary orientation into account. Let us assume  the circular arc triangle $T$ has angles $0, \pi/2, \pi/3$ at its vertices $a,b,c\in \partial T$, respectively. Then we orient $\partial T$ so that we traverse $a,b,c$ in this cyclic order if we run through $\partial T$ according to the orientation. Now suppose $T$ lies on the left of 
$\partial T$ with this orientation. Since M\"obius transformations are orientation-preserving, it then follows that $T$ cannot  be M\"obius equivalent to $\overline{T_0}$ and so it must be M\"obius
equivalent to $T_0$. If $T$ lies on the right of $\partial T$, then $T$ is M\"obius
equivalent to $\overline{T_0}$. A similar analysis applies to circular arc triangles that are M\"obius 
equivalent to $T_1$ and $\overline{T_1}$.  

The proof of our main result is now easy.
\begin{proof}[Proof of Theorem~\ref{main}] 
 We know that $\tau\mapsto J(\tau)$ sends $T_0$ conformally to  
 the circular arc triangle given by the closed lower halfplane. Here the vertex correspondence under the map is $\infty\mapsto \infty$, $i\mapsto 1$, $\rho\mapsto 0$. 
 On the other hand, we know that $p=H_1/H_2$, now considered as a function of 
 $J$, sends the lower $J$-halfplane conformally onto some circular arc triangle 
 $T$ with vertices  $a, b, c\in \bC$ where the  angles are  $0$, $\pi/2$, $2\pi/3$,  
 respectively. Here the vertex correspondence under $J\mapsto H_1/H_2$ is given by $0\mapsto c$, $1\mapsto b$,  $\infty\mapsto a$. 
 
 If we compose $\tau\mapsto J$ with $J\mapsto H_1/H_2$, then we obtain 
 the map $\tau \mapsto H_1(\tau)/H_2(\tau)=p(\tau)$. In particular, $p$ is a conformal map between the circular arc triangles $T_0$ and $T$ such that
 $p(\infty)=a$,  $p(i)=b$, $p(\rho)=c$.
 On the other hand, we know the values that $p$ obtains at these locations (see \eqref{p(i)},   \eqref{prho},  and \eqref{infty}). It follows that  $a=p(\infty)=\infty$,
 $b=p(i)=-i$,   $c=p(\rho)=\overline{\rho}$.
 
  If we orient $\partial T_0$ so that $T_0$ lies on the left of $\partial T_0$, then the vertices of $T_0$ are in cyclic order $ \infty, i, \rho$. Since the conformal map $p$ 
 of $T_0$ onto $T$ preserves orientation, $T$ lies on the left of $\partial T$ if $\partial T$ carries the induced orientation under $p$. This corresponds to  the cyclic order  $p(\infty)=\infty$,  $p(i)=-i$,  $p(\rho)=\overline {\rho}$ of the vertices of $T$. 
 
   Lemma~\ref{lem:triuniq} and the subsequent discussion after this lemma  now imply that 
   $T$ is M\"obius equivalent to $T_1$ (and not to $\overline{T_1}$). Since $T$ and $T_1$ have the same vertices, it follows that 
 $T=T_1$ and the statement follows.
 \end{proof}

\section{An alternative approach}\label{s:alt}
Our proof of Theorem~\ref{main} is in a sense the ``classical" proof. It may be interesting to point out a different and simple direct argument that avoids the theory of  hypergeometric ODEs. We require some preparation.

\begin{prop}\label{prop:arg}
Suppose $U\sub \bC$ is a closed Jordan region and $f\:\inte(U)\ra \C$ is a holomorphic function  that has a continuous extension (as a map into $\bC$) to the boundary $\partial U\sub \bC$. Suppose this extension maps $\partial U$ homeomorphically to the boundary $\partial V\sub \bC$ of a closed Jordan region $V\sub \bC$ with $ \inte(V)\sub \C$. 

If $\infty\in \partial V$ we make the following additional assumptions:
\begin{itemize} 

 \item[\textnormal{(i)}] if $z_0\in \partial U$ is the unique point with $f(z_0)=\infty$, then $f$ is locally injective on $U$ near $z_0$.
 
 \smallskip 
\item[\textnormal{(ii)}]  if the Jordan curve $\partial U$ is oriented so that $U$ lies on the left of $\partial U$ and if $\partial V$ carries the orientation induced by $f$, then $V$ lies on the left of $\partial V$.

\end{itemize}
Under these hypotheses, $f$ is a homeomorphism of   $U$ onto $V$ that is a biholomorphism  between $\inte(U)$ and $\inte(V)$.
\end{prop}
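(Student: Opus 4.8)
The plan is to prove the statement by the Argument Principle, counting for each finite $w_0\in\bC\setminus\partial V$ the number $N(w_0)$ of solutions of $f(z)=w_0$ in $\inte(U)$, weighted by multiplicity, and showing that $N(w_0)=1$ for $w_0\in\inte(V)$ and $N(w_0)=0$ for $w_0$ in the other complementary component $\bC\setminus V$ of the Jordan curve $\partial V$. First I would note that $f$ is non-constant, since otherwise $f(\partial U)$ would be a single point and not the Jordan curve $\partial V$; hence $f$ is open on the connected region $\inte(U)$. Since $f(\partial U)=\partial V$ avoids every $w_0\notin\partial V$, the function $f-w_0$ has no zeros on $\partial U$, and the Argument Principle identifies $N(w_0)$ with the winding number of the image curve $f\circ\partial U$ about $w_0$.

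If $\infty\notin\partial V$, then $f\circ\partial U=\partial V$ is a Jordan curve lying in $\C$ and traversed once, so its winding number about any point off the curve is $0$ or $\pm1$. As $N(w_0)\ge0$, the value $+1$ must occur on the bounded complementary region and $0$ on the unbounded one; the unbounded region is the one containing $\infty$, namely $\bC\setminus V$, while the bounded region is $\inte(V)$. This yields $N=1$ on $\inte(V)$ and $N=0$ on $\bC\setminus V$.

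The essential case is $\infty\in\partial V$, where $f\circ\partial U$ passes through $\infty$ at the single point $f(z_0)$ and the contour integral is improper there. Here I would excise a small closed disk $B$ centered at $z_0$ and integrate instead over the curve $\gamma$ obtained from $\partial U$ by replacing the subarc inside $B$ with an arc of $\partial B$ lying in $\inte(U)$. For $B$ small the curve $\gamma$ lies in $\inte(U)$, avoids $z_0$, and encloses all $w_0$-points, while $f$ stays finite and different from $w_0$ along $\gamma$; thus $N(w_0)$ equals the winding number of $f\circ\gamma$ about $w_0$. The local injectivity hypothesis (i) is exactly what guarantees that $f\circ\gamma$ is again a \emph{simple} closed curve: $f$ is injective on $\partial U$, and injective near $z_0$ as well, so for small $B$ the detour $f\circ(\gamma\cap B)$ is a simple arc near $\infty$ that meets the rest of the image only at its endpoints. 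Consequently the winding number of $f\circ\gamma\subseteq\C$ about $w_0$ is again $0$ or $\pm1$; letting the radius of $B$ tend to $0$, the enclosed region converges to $\inte(V)$, and the orientation hypothesis (ii), together with the orientation-preserving nature of holomorphic maps, pins the sign to $+1$ on $\inte(V)$ and $0$ on $\bC\setminus V$.

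In either case $N(w_0)=1$ for $w_0\in\inte(V)$ and $N(w_0)=0$ for $w_0\in\bC\setminus V$. The first statement shows that every point of $\inte(V)$ has exactly one, necessarily simple, preimage, so $f\colon\inte(U)\to\inte(V)$ is a holomorphic bijection with non-vanishing derivative, i.e.\ a biholomorphism; the second shows that no interior point maps into $\bC\setminus V$, and since $f(\inte(U))$ is open it cannot meet $\partial V$ either, giving $f(\inte(U))=\inte(V)$. Finally the continuous extension $f\colon U\to V$ is a bijection --- bijective from $\inte(U)$ onto $\inte(V)$ and from $\partial U$ onto $\partial V$, with these image sets disjoint --- from a compact space onto a Hausdorff space, hence a homeomorphism. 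I expect the main obstacle to be precisely the case $\infty\in\partial V$: making rigorous the limiting winding-number computation across the pole at $z_0$, where hypotheses (i) and (ii) are needed to control, respectively, the multiplicity (simplicity of the detour) and the sign of the boundary passage.
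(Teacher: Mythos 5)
Your proposal is correct and takes essentially the same route as the paper: the boundary version of the Argument Principle settles the case $\infty\notin\partial V$, and for $\infty\in\partial V$ you make the same excision of a small boundary arc near $z_0$ replaced by a detour in $\inte(U)$, with hypothesis (i) guaranteeing the modified boundary curve maps injectively and hypothesis (ii) fixing the correct side, followed by a shrinking limit. The only cosmetic difference is that the paper phrases the second case as a reduction to the first (obtaining homeomorphisms of truncated regions $U'$ onto Jordan regions $V'$) rather than redoing the winding-number count, and both arguments leave the "suitable choices" needed to keep the detour's image disjoint from the rest of $\partial V$ at the same level of detail.
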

In our application of this statement, $\infty$ will be on the boundary of $U$ and $V$. This  is the reason why the formulation is somewhat technical and we cannot simply assume $U,V\sub \C$.   

\begin{proof}  Suppose first that $\infty\not \in \partial V$ (and hence $\infty\not \in V$). Then  the Argument Principle implies that on $\inte(U)$ the function $f$ attains each value in $\inte(V)$ once and no other values (see \cite[pp.~310--311]{Bur}  and 
 \cite[Exercise 9.17 (i)] {Bur} for this type of argument). The statement easily follows in this  case.   
 
 If $\infty \in  \partial V$, then the additional assumptions allow us to reduce to the previous case. For this we consider  auxiliary 
 Jordan regions $U'\sub U$ whose boundaries $\partial U'$ are obtained by replacing a small arc $\alpha\sub \partial U$ that contains $z_0$ in its interior with a small arc $\alpha'$ that has  the same endpoints, but whose interior is contained in $\inte(U)$ and so avoids $z_0$.  With suitable choices, condition (i) implies that $f$ is a homeomorphism of $\partial U'$ onto its image and $\infty\not \in f(\partial U')$. By the first part of the proof, there exists a Jordan region $V'\sub \C$ with $\partial V'=f(\partial U')$ such that $f$ is a ho\-meomorphism of $U'$ onto $V'$.  By choosing the arcs $\alpha$ and $\alpha'$ smaller and smaller, we  conclude that $f$ is a homeomorphism of $U$ onto its image. Condition (ii) then implies 
 $f(U)=V$ and  the statement is also true in this case.
 \end{proof}

We need  the following estimate for $E_2$ (see \cite[Lemma~2.3]{IJT}).

\begin{lem} \label{E2est}We have 
\begin{equation}
|E_2(\tau)-1|\le \frac{24|q|}{ (1-|q|)^3}
\end{equation}
for $\tau\in \H$. In particular,
\begin{equation}
E_2(\tau)\ne 0 \text{ for } \im(\tau)\ge \sqrt{3}/2.
\end{equation}
\end{lem}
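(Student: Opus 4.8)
The plan is to prove the two displayed statements in Lemma~\ref{E2est} in sequence, deriving the nonvanishing from the bound. For the inequality, I would start directly from the defining series
\begin{equation*}
E_2(\tau)=1-24\sum_{n=1}^\infty \frac{nq^n}{1-q^n},
\end{equation*}
so that $E_2(\tau)-1=-24\sum_{n=1}^\infty nq^n/(1-q^n)$. The natural route is to estimate the sum term by term. Using the triangle inequality, $|E_2(\tau)-1|\le 24\sum_{n=1}^\infty n|q|^n/|1-q^n|$, and then bounding the denominator below via $|1-q^n|\ge 1-|q|^n\ge 1-|q|$ for each $n\ge 1$ (since $|q|<1$). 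This gives $|E_2(\tau)-1|\le \frac{24}{1-|q|}\sum_{n=1}^\infty n|q|^n$. The remaining sum is the standard power series $\sum_{n=1}^\infty n x^n = x/(1-x)^2$ evaluated at $x=|q|$, which produces $|q|/(1-|q|)^2$ and hence the claimed bound $24|q|/(1-|q|)^3$.

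For the nonvanishing claim, I would specialize the bound to the regime $\im(\tau)\ge \sqrt{3}/2$. Here $|q|=e^{-2\pi\im(\tau)}\le e^{-\pi\sqrt3}$, and the idea is simply to verify that this value of $|q|$ is small enough to force the right-hand side $24|q|/(1-|q|)^3$ to be strictly less than $1$; once $|E_2(\tau)-1|<1$, the value $E_2(\tau)$ lies in the open disk of radius $1$ about $1$ and in particular cannot be $0$. Since the function $x\mapsto 24x/(1-x)^3$ is increasing on $[0,1)$, it suffices to check the inequality at the single worst-case value $x=e^{-\pi\sqrt3}$, which is a numerical verification: $e^{-\pi\sqrt3}\approx 4.3\times 10^{-3}$, so $24|q|/(1-|q|)^3$ is on the order of $0.1$, comfortably below $1$.

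I expect no genuine obstacle here; the only point requiring mild care is the passage from the per-term denominator bound to a clean geometric-type estimate, where one must make sure the uniform lower bound $|1-q^n|\ge 1-|q|$ is legitimately used inside the sum (it is, since $|q^n|=|q|^n\le|q|$). The final numerical step is the sort of computation that should be stated but not belabored. The structure of the argument is therefore: apply the triangle inequality and the series expansion, bound the denominators uniformly, sum the resulting elementary series in closed form, and finally evaluate at the threshold $\im(\tau)=\sqrt3/2$ to conclude the disk containment and thus the nonvanishing.
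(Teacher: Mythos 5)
Your proposal is correct and follows essentially the same route as the paper: triangle inequality on the $q$-series, the uniform bound $|1-q^n|\ge 1-|q|^n\ge 1-|q|$, the closed form $\sum_{n\ge1} n|q|^n=|q|/(1-|q|)^2$, and a numerical evaluation at $|q|=e^{-\pi\sqrt3}$ (where the bound is $0.105\dots<1$, exactly as in the paper) to conclude that $E_2(\tau)$ lies in the disk of radius $1$ about $1$. The only cosmetic difference is that you invoke monotonicity of $x\mapsto 24x/(1-x)^3$ while the paper phrases the same fact as monotone decrease in $\im(\tau)$.
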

\begin{proof} 
We have
$$ |E_2(\tau) -1| \le 
 24\sum_{n=1}^\infty \frac{n|q|^n}{1-|q|^n}
 \le 
 \frac{24}{1-|q|}\sum_{n=1}^\infty n|q|^n\\
 = \frac{24|q|}{(1-|q|)^3}.
$$
Since $|q|=e^{-2\pi\im(\tau)}$, the last expression is monotonically decreasing as a function of $\im(\tau)$. It follows that for $\im(\tau)\ge \sqrt{3}/2$ we have 
$$ |E_2(\tau) -1| \le \frac{24 e^{-2\pi \sqrt 3/2 } }
{(1-e^{-2\pi \sqrt 3/2})^3}=0.105\dots \, . 
$$ 
The statement follows.
\end{proof}

\begin{lem} The function $p$ is holomorphic near each point  $\tau \in T_0\cap \H$.
Moreover, $p'(\tau)=0$ for $\tau \in T_0\cap \H$ if and only if $\tau=\rho$.
\end{lem}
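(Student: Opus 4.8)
The plan is to read off both claims from the explicit formula \eqref{pE2}, namely
\[
p(\tau)=\tau-\frac{6i}{\pi E_2(\tau)}.
\]
Since $E_2$ is holomorphic on all of $\H$, the function $p$ is holomorphic precisely on the set where $E_2\ne 0$. So the first assertion reduces to showing that $E_2(\tau)\ne 0$ for every $\tau\in T_0\cap\H$. First I would recall that $T_0$ is the region $0\le\re(\tau)\le 1/2$, $|\tau|\ge 1$ in the upper halfplane, and observe that every such $\tau$ has $\im(\tau)\ge\sqrt3/2$: the minimum of $\im(\tau)$ on $T_0$ is attained at the corner $\rho=\tfrac12(1+i\sqrt3)$, where $\im(\rho)=\sqrt3/2$, because on the unit-circle arc bounding $T_0$ from below the imaginary part only increases as $\re(\tau)$ decreases from $1/2$. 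Then Lemma~\ref{E2est} gives $E_2(\tau)\ne 0$ for $\im(\tau)\ge\sqrt3/2$, hence on all of $T_0\cap\H$, and holomorphy of $p$ follows.

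For the derivative, I would differentiate \eqref{pE2} directly. Writing $D=\tfrac{1}{2\pi i}\tfrac{d}{d\tau}$ as in the paper, one computes
\[
p'(\tau)=1+\frac{6i}{\pi}\cdot\frac{E_2'(\tau)}{E_2(\tau)^2}
=1+\frac{6i}{\pi}\cdot\frac{2\pi i\,DE_2(\tau)}{E_2(\tau)^2}
=1-\frac{12\,DE_2(\tau)}{E_2(\tau)^2}.
\]
Now invoke Ramanujan's identity \eqref{E2der}, $DE_2=\tfrac{1}{12}(E_2^2-E_4)$, to get
\[
p'(\tau)=1-\frac{E_2(\tau)^2-E_4(\tau)}{E_2(\tau)^2}=\frac{E_4(\tau)}{E_2(\tau)^2}.
\]
Since we have already shown $E_2\ne 0$ on $T_0\cap\H$, this expression is well defined there, and $p'(\tau)=0$ if and only if $E_4(\tau)=0$.

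Finally I would identify the zeros of $E_4$ inside $T_0\cap\H$. By \eqref{E4zeros}, the zeros of $E_4$ on $\H^*$ are exactly the $\PSL_2(\Z)$-orbit of $\rho$, and $\rho$ itself lies in $T_0$ with $E_4(\rho)=0$ by \eqref{E4rho}. It remains to check that $\rho$ is the only point of this orbit meeting $T_0\cap\H$. This is the one place requiring a small argument rather than a direct quotation: since $T_0$ is half of the standard fundamental domain for $\PSL_2(\Z)$ (its reflection in the imaginary axis being the other half), no two interior points of $T_0$ are $\PSL_2(\Z)$-equivalent, and the only boundary identifications carry $\rho$ to the equivalent corner on the other half, outside $T_0$. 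Hence $\rho$ is the unique zero of $E_4$ in $T_0\cap\H$, and therefore the unique critical point of $p$ there. The main (though minor) obstacle is this last uniqueness verification; everything else is a direct consequence of the identities \eqref{pE2}, \eqref{E2der}, \eqref{E4rho}, \eqref{E4zeros}, and Lemma~\ref{E2est} already available in the excerpt.
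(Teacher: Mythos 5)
Your proof is correct and takes essentially the same route as the paper: holomorphy of $p$ from \eqref{pE2} combined with Lemma~\ref{E2est}, the computation $p'=E_4/E_2^2$ via \eqref{E2der}, and the location of the zeros of $E_4$ via \eqref{E4rho} and \eqref{E4zeros}. The only difference is that you spell out two details the paper leaves implicit, namely that $\im(\tau)\ge \sqrt3/2$ on $T_0$ and that $\rho$ is the unique point of its $\PSL_2(\Z)$-orbit lying in $T_0$, and both of your justifications are sound.
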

So the only critical point of $p$ on $T_0\cap \H$ is at $\tau=\rho$.
\begin{proof} The first part follows from \eqref{pE2}, because $E_2$ does not take the value $0$ on $T_0\cap \H$ by Lemma~\ref{E2est}. By \eqref{pE2} and \eqref{E2der} we also
have 
\begin{equation}\label{p'rep}
p'(\tau)=1+\frac{6iE_2'(\tau)}{\pi E_2^2(\tau)}=\frac{E_4(\tau)}{E_2^2(\tau)}.
\end{equation}
Now by \eqref{E4zeros} all zeros of $E_4$ are given by $S(\rho)$, $S\in \PSL_2(\Z)$.  No point $\tau\in T_0\setminus\{\rho\}$ is equivalent to  
$\rho$ under the action of  $\PSL_2(\Z)$ on $\H^*$. It follows that $\tau=\rho$ is the only zero of $E_4$ and hence of $p'$ on $T_0$.\end{proof}

\begin{proof}[Alternative proof of Theorem~\ref{main}]
We denote by 
\begin{align*}
A&\coloneqq \{it: t\ge 1\}\cup\{\infty\},\\
B&\coloneqq \{e^{it}: \pi/3\le t\le  \pi/2 \},\\
C&\coloneqq \{\tfrac12+it: t\ge \sqrt3/2 \} \cup\{\infty\}
\end{align*}
the three sides of $T_0$ and by  
\begin{align*}
A'&\coloneqq \{it: t\ge -1\}\cup\{\infty\},\\
B'&\coloneqq \{e^{it}: -\pi/2\le t\le  -\pi/3 \},\\
C'&\coloneqq \{\tfrac12+it: t\ge -\sqrt3/2 \} \cup\{\infty\}
\end{align*}
the three sides of $T_1$. 

It follows directly from the definition (see \eqref{E2}) that $E_2(\tau)\in \R$ when $\tau\in \H$ with $\re(\tau)=0$ or $\re(\tau)=1/2$. Here $E_2(\tau)>0$ if in addition $\im(\tau)$ is large enough.

As a consequence, $\re(p(\tau))=0$ if $\re(\tau)=0$ (see \eqref{pE2}). In particular,  $p$ sends $A\setminus\{\infty\} $ into the imaginary axis. Moreover, since $p'(\tau)\ne 0$ on $A$, we move strictly monotonically from $p(i)=-i$ to $p(\infty)=\infty$ as $\tau\in A$
travels from $i$ to $\infty$. Here $\im(p(\tau))$ must be strictly increasing (and not decreasing) with increasing $\im(\tau)$ for $\tau\in A$ as $\im(p(\tau))$ is positive 
for $\tau \in A$ with  large $\im(\tau)$. This implies that $p$ sends $A$ homeomorphically onto $A'$.  

Similarly, $\re(p(\tau))=1/2$ if $\re(\tau)=1/2$. Hence $p$ sends $C\setminus \{\infty\} $ into the line $
L\coloneqq \{\tau\in \C:\re(\tau)=1/2\}$. Since $p'(\tau)\ne 0$ for interior points of $C$, we 
move strictly monotonically from $p(\rho)=\overline \rho$ to $p(\infty)=\infty$ on the line $L$ as $\tau\in C$ moves from $\rho$ to $\infty$ on $C$. Again 
$\im(p(\tau))>0$ for $\tau\in C$  with  large $\im(\tau)$, and so we must  have a
 strict increase here. This implies that $p$ sends $C$ homeomorphically onto $C'$.  

Since $p$ sends $A$ into $A'$, by the Schwarz Reflection Principle we must have
$$p(-\overline \tau)=-\overline {p(\tau)}$$ 
for all $\tau\in \H$.
If $\tau\in B$, then $\tau$ lies on the unit circle and so $\overline \tau=1/\tau$. 
Then the transformation behavior of $p$  implies that
$$\overline {p(\tau)}= -p(-\overline \tau)=-p(-1/\tau)=1/p(\tau), $$
  and so $|p(\tau)|=1$. We see that $p$ sends $B$ into the unit circle. Since $p'$ does not vanish at any interior point of $B$, the point $p(\tau)$  moves strictly monotonically along the unit circle as 
  $\tau$ moves  from $i$ to $\rho$ along $B$. 
  
Now  $p'(i)\ne 0$, and so $p$ is a conformal and hence orientation-preserving  near $i$. We know that $p$ sends $A$ oriented in a positive direction from $i$ to $\infty$ into the arc $A'$ oriented from $-i$ to $\infty$. Since $B$ lies on  the right of the oriented arc $A$ locally near $i$, the initial part of the   image $p(B)$ must lie on the right of $A'$.
This is only possible if $p(\tau)$ moves  in counter-clockwise direction along the unit circle from $p(i)=-i$ to $p(\rho)=\overline {\rho}$ as $\tau$ moves from $i$ to $\rho$ along $B$.   In this  process the point $p(\tau)$ necessarily traces out the arc $B'$ at least once. In principle, it could trace out $B'$ more than once by wrapping around the unit circle repeatedly.
It this were the case, then  the moving points  $p(\tau)$ and $\tau$ would necessarily have to collide at least once somewhere in $B$; in other words, there would be a point $\tau\in B$ such that $p(\tau)=\tau$. But it immediately follows from 
\eqref{pE2} that $p$ cannot have fixed points in $\H$. 
We conclude that  $p$ maps  $B$ homeomorphically onto $B'$.

These considerations shows that  $p$ sends the three sides $A$, $B$, $C$ of $T_0$ homeomorphically onto the three sides $A'$, $B'$, $C'$ of $T_1$, respectively.
It follows that $p$ is a homeomorphism of $\partial T_0$ onto $\partial T_1$. Moreover, if we orient   $\partial T_0$ so that $T_0$ lies on the left of $\partial T_0$ and equip $\partial T_1$ with the orientation induced by $f$, then 
 $T_1$ lies on the left of $\partial T_1$.

In order to apply Proposition~\ref{prop:arg}, note that $p$ is holomorphic on the interior of $T_0$. To see that is is locally injective 
on $T_0$ near $\infty\in \partial T_0$ (which is the unique preimage
 of $\infty\in \partial T_1$ under $f$  in $T_0$), we first observe  that  
\eqref{pE2} and \eqref{E2} imply that 
\[
p'(\tau)= 1+O(q) 
\]
for $\tau\in \H$ with $\im(\tau)$ large. In particular, there exists 
$c_0>0$ such that $\re(p'(\tau))>0$ when $\tau\in \H$ and 
$\im(\tau)>c_0$. It follows that $p$ is injective on the convex region 
$\{ \tau\in \H: \im(\tau)>c_0\}$ (see \cite[Proposition 1.10]{Po}). In particular, $p$ is locally injective on $T_0$ near $\infty$.

 Proposition~\ref{prop:arg} now implies that $p$ is a conformal map of the circular arc triangle $T_0$ onto the circular arc triangle $T_1$. Here  the vertex correspondence is as  in the statement and the claim follows.      
 \end{proof}
 
 We want to point out a consequence of our considerations.
 \begin{cor}\label{cor:critp}
 The critical points of $p$ on $\H$ are precisely the points that are equivalent to $\rho$ under the action of $\PSL_2(\Z)$. 
 \end{cor}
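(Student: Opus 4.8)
The plan is to reduce the statement to a fact already established in the excerpt, namely the representation of the derivative
\begin{equation*}
p'(\tau)=\frac{E_4(\tau)}{E_2^2(\tau)}
\end{equation*}
from \eqref{p'rep}, together with the identification \eqref{E4zeros} of the zeros of $E_4$ in $\H^*$ as exactly the $\PSL_2(\Z)$-orbit of $\rho$. The critical points of $p$ on $\H$ are by definition the zeros of $p'$, so I first need to argue that $p$ is actually holomorphic (not merely meromorphic) at the points in question, so that ``$p'=0$'' is the correct notion of critical point there. This is where the denominator $E_2^2$ enters: I must rule out that a zero of $E_4$ is cancelled or obscured by a pole or zero of $E_2$ at the same point.

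First I would record that $p$ is meromorphic on $\H$ by Theorem~\ref{main}, and that its poles occur exactly where $E_2$ vanishes (by \eqref{pE2}). The key observation is that the zero sets of $E_4$ and $E_2$ in $\H$ are disjoint. Indeed, the points equivalent to $\rho$ under $\PSL_2(\Z)$ are the zeros of $E_4$; at such a point $p'=E_4/E_2^2$ has a genuine zero provided $E_2$ neither vanishes nor has a pole there. Since $E_2$ is holomorphic on all of $\H$, it has no poles, so the only thing to check is that $E_2(\tau)\ne 0$ whenever $E_4(\tau)=0$.

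To see this I would use the equivariance already in hand. By \eqref{transpS} the map $p$ satisfies $p\circ S=S\circ p$ for $S\in\PSL_2(\Z)$, and $S$ is a M\"obius transformation, hence a local biholomorphism of $\bC$; differentiating the relation $p(S(\tau))=S(p(\tau))$ shows that $p'$ vanishes at $S(\tau)$ exactly when it vanishes at $\tau$. Thus the critical set of $p$ is $\PSL_2(\Z)$-invariant, and it suffices to analyze a single representative $\tau=\rho$. At $\rho$ we have $E_4(\rho)=0$ by \eqref{E4rho}, while $E_2(\rho)\ne 0$: this follows because a common zero of $E_2$ and $E_4$ is impossible, as I now explain. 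If $E_4(\tau)=0$ and $E_2(\tau)=0$ held simultaneously, then by the derivative formula \eqref{E2der} we would get $DE_2(\tau)=\tfrac{1}{12}(E_2^2-E_4)(\tau)=0$, and iterating the Ramanujan relations \eqref{E2der}--\eqref{E6der} forces all higher $\tau$-derivatives of $E_2$ to vanish at that point as well, so $E_2\equiv 0$, contradicting $E_2(\infty)=1$. Therefore $E_2(\rho)\ne 0$ and, by $\PSL_2(\Z)$-invariance together with \eqref{E4zeros}, $E_2$ is non-vanishing at every zero of $E_4$.

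Combining these facts: $p$ is holomorphic at each point $S(\rho)$ (since $E_2$ does not vanish there), and $p'(S(\rho))=E_4(S(\rho))/E_2(S(\rho))^2=0$ since $E_4(S(\rho))=0$. Conversely, if $p'(\tau)=0$ at a point $\tau\in\H$ where $p$ is holomorphic, then $E_4(\tau)=0$, whence $\tau=S(\rho)$ for some $S\in\PSL_2(\Z)$ by \eqref{E4zeros}. At the poles of $p$, where $E_2$ vanishes, $p$ is not holomorphic and these are not counted as critical points. This establishes that the critical points of $p$ on $\H$ are precisely the $\PSL_2(\Z)$-orbit of $\rho$. The main obstacle is the bookkeeping at the shared locus---confirming $E_4$ and $E_2$ have no common zero so that the formula \eqref{p'rep} genuinely detects critical points---and the cleanest route to this is the Ramanujan recursion argument sketched above rather than any direct estimate.
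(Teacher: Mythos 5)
Your reduction of the corollary to the two facts $p'=E_4/E_2^2$ (equation \eqref{p'rep}) and the description \eqref{E4zeros} of the zeros of $E_4$ is exactly the alternative route the paper itself mentions in one sentence after its proof; the paper's actual argument is geometric, reading off the angle-doubling at the $\pi/3$-vertices of the tessellation triangles from Corollary~\ref{TT} and the Schwarz Reflection Principle. You also correctly identify the crux of the analytic route: one must show that $E_2$ does not vanish at any zero of $E_4$. But the Ramanujan-recursion argument you offer for this fails at the second step. Suppose $E_2(\tau_0)=E_4(\tau_0)=0$. Then indeed $DE_2(\tau_0)=\tfrac{1}{12}(E_2^2-E_4)(\tau_0)=0$, but differentiating \eqref{E2der} once more and using \eqref{E4der} gives $D^2E_2=\tfrac16 E_2\,DE_2-\tfrac1{36}(E_2E_4-E_6)$, which at $\tau_0$ equals $\tfrac{1}{36}E_6(\tau_0)$ --- and this is \emph{not} zero: since $\Delta=\tfrac{1}{1728}(E_4^3-E_6^2)$ is zero-free on $\H$ and $E_4(\tau_0)=0$, we get $E_6(\tau_0)^2=-1728\,\Delta(\tau_0)\ne 0$. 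So the higher derivatives of $E_2$ do not all vanish at a hypothetical common zero, the induction collapses, and no contradiction with $E_2\not\equiv 0$ is obtained. The non-vanishing you need is true, but your mechanism does not prove it; ironically, the ``direct estimate'' you set aside is what works.

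The repair stays within the paper's toolkit. Lemma~\ref{E2est} gives $E_2(\tau)\ne 0$ for $\im(\tau)\ge \sqrt3/2$, and $\im(\rho)=\sqrt3/2$, so $E_2(\rho)\ne 0$ directly. For the rest of the orbit, use the equivariance you already invoke: $p(S(\rho))=S(p(\rho))=S(\overline\rho)=\overline{S(\rho)}$ lies in the lower halfplane, in particular is finite; since by \eqref{pE2} the poles of $p$ occur exactly at the zeros of $E_2$, it follows that $E_2(S(\rho))\ne 0$ for every $S\in\PSL_2(\Z)$. A second, smaller gap is your dismissal of the poles of $p$ as ``not counted as critical points'': the corollary concerns $p$ as a map into $\bC$ (this is how the paper uses it when relating the critical points to the poles of the Schwarzian $\{p,\tau\}$ in Section~\ref{s:rems}), so a multiple pole would genuinely be a critical point and must be excluded. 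This is now easy: at a zero $\tau_0$ of $E_2$ we know $E_4(\tau_0)\ne 0$ by the above, so \eqref{E2der} gives $DE_2(\tau_0)=-\tfrac{1}{12}E_4(\tau_0)\ne 0$; hence every zero of $E_2$ is simple, every pole of $p$ is simple, and $p$ is locally injective there. With these two repairs your argument is complete and is precisely the analytic alternative the paper alludes to; the paper's geometric proof additionally shows at a glance that all critical points are of first order, which on your route follows from the simplicity of the zeros of $E_4$.
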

  
 \begin{proof}  It follows from Corollary~\ref{TT} and the Schwarz Reflection Principle  that  if $T$ is a triangle from the tessellation $\mathcal{T}$ (as defined in the introduction), then $p$ is a conformal map near each point $\tau\in T\cap \H$ distinct from the vertex $\tau_T$  of $T$ where the angle is $\pi/3$. There the angle is doubled by $p$ and so $\tau_T$ is a critical point (of first order).  
 Therefore, $p$ has precisely the critical points $\tau_T$, $T\in \mathcal{T}$. These are precisely the points that are equivalent to $\rho$ under the action of $\PSL_2(\Z)$. \end{proof}
 
 As the proof shows, all critical points of $p$ are of first order, meaning that the local degree of the map is $2$ at these points. Corollary~\ref{cor:critp} can also be derived from  \eqref{p'rep} and  \eqref{E4zeros}.

  \section{Remarks} \label{s:rems}
 1. Our route to derive Proposition~\ref{prop:hypp} is essentially due to Fricke--Klein. In contrast to our approach,  
 Fricke--Klein usually prefer to work with homogeneous modular forms.
 If $\widetilde f(\om_1, \om_2)$ is  such a form, then they use  an 
 associated form 
 $$ D_\eta \widetilde f= \eta_1\frac{\partial \widetilde f}{\partial \om_1} +
  \eta_2\frac{\partial  \widetilde f}{\partial \om_2}.$$ 
If $\widetilde f$ is a homogeneous modular form of degree $-k$, then  it is straightforward to see that 
 $\widetilde g=D_\eta \widetilde f$ is a homogeneous modular form of degree $-k-2$. The 
 differentiation process $D_\eta$ occurs implicitly in Fricke--Klein \cite{FK} and was systematically used by Fricke in \cite{Fr} (see also 
  \cite{FS}).
 
 Let us consider the associated inhomogeneous forms $f$ and $g$ so that 
 $$\widetilde f(\om_1, \om_2)=\biggl(\frac{2\pi}{\om_2}\biggr)^k f(\om_1/\om_2)
 \text{ and }  \widetilde g(\om_1, \om_2)=\biggl(\frac{2\pi}{\om_2}\biggr)^{k+2}g(\om_1/\om_2).$$
 Then 
\begin{align*} \widetilde g &=D_\eta \widetilde f= \eta_1\frac{\partial \widetilde f}{\partial \om_1} +
  \eta_2\frac{\partial  \widetilde f}{\partial \om_2}\\ &=
(2\pi)^k (\eta_1\om_2^{-k-1}- \eta_2\om_1\om_2^{-k-2} )f'(\tau)-k(2\pi)^k \eta_2\om_2^{-k-1}f(\tau)\\
 &=-(2\pi)^{k+1} i \om_2^{-k-2} f'(\tau)-\frac{k}{12} (2\pi)^{k+2} \om_2^{-k-2} E_2(\tau) f(\tau).
 \end{align*}
 So passing to the inhomogeneous version $g$ of $\widetilde g$, we have 
 $$ g(\tau)=\frac{1}{(2\pi)^{k+2}}\widetilde g(\tau, 1)=\frac{1}{2\pi i} f'(\tau)-\frac{k}{12} E_2(\tau) f(\tau), $$ 
 and so 
 $$ g=Df-\frac{k}{12} E_2f= \vartheta_kf. $$ 
This shows that  the Serre  derivative $\vartheta_kf$ of a modular form $f$ of  weight
$k$ corresponds to the homogenous form $D_\eta\widetilde f$.  So Serre derivative 
and the differentiation process $D_\eta$ are essentially the same, but only differ whether one considers inhomogeneous or homogeneous modular forms.  

2. The meaning of the functions   in \eqref{Om} and \eqref{H} becomes more transparent if one writes them using  the homogeneous function 
$$\widetilde \Delta (\om_1, \om_2)=\biggl(\frac{2\pi}{\om_2}\biggr)^{12}\Delta(\om_1/\om_2)$$ and the associated root 
$$\widetilde \Delta^{1/12} (\om_1, \om_2)=
\biggl(\frac{2\pi}{\om_2}\biggr)\Delta^{1/12}(\tau).$$
Then $$\Om_1=\frac{\om_1}{2\pi} \widetilde \Delta^{1/12}(\om_1,\om_2), \quad 
\Om_2=\frac{\om_2}{2\pi}\widetilde \Delta^{1/12}(\om_1,\om_2). $$  So $\Om_1$ and $\Om_2$ are essentially just  the periods $\om_1$ and $\om_2$ renormalized to make them homogeneous of 
degree $0$. Similarly, using \eqref{eta1} and \eqref{eta2}, one can see that 
$$H_1=\frac{6\eta_1 }{\pi} \widetilde\Delta^{-1/12} (\om_1,\om_2), \quad H_2=\frac{6 \eta_2 }{\pi} \widetilde \Delta^{-1/12}(\om_1,\om_2).$$
Again here $H_1$ and $H_2$ are homogeneous of degree $0$ as functions of 
the pair $(\om_1, \om_2)$ and hence  functions of $\tau=\om_1/\om_2$ alone; so 
the transition  from $(\eta_1, \eta_2)$ to $(H_1, H_2)$ is also a normalization procedure.

3. We can use Lemma~\ref{OHderiv} also to derive a hypergeometric differential equation  with a pair of fundamental solutions given by $\Om_1$ and $\Om_2$. Indeed, if we use
  notation as in the proof of Proposition~\ref{prop:hypp}, then we have
\begin{align*}
\frac{d^2\Om}{dJ^2}& =   \frac{d}{dJ}\biggl( -\frac{1}{24\sqrt3}  J^{-2/3} (J-1)^{-1/2}  H\biggr)\\
&=\biggl(-\frac {2}{3J} -\frac {1}{2(J-1)}\biggr) \frac{d\Om}{dJ}-
\frac{1}{24\sqrt3}  J^{-2/3} (J-1)^{-1/2}  \frac{dH}{dJ}\\
&=-\frac{7J-4}{6J(J-1)}\frac{d\Om}{dJ}-\frac{1}{144J(J-1)}\Om. 
\end{align*} 
This is a hypergeometric  differential equation  with parameters $\alpha=\beta=1/12$ and $\ga=2/3$. We conclude that $\tau =\Om_1/\Om_2$ as a function of $J$  maps the closed upper and lower $J$-halfplanes onto  circular arc triangles with angles $\la\pi$,  $\mu\pi$, $\nu\pi$, where
$$ \la=1-\ga=1/3,\quad \mu=\ga-\alpha-\beta=1/2,\quad \nu=\alpha-\beta=0.$$ 
Indeed, this is in agreement with the fact, pointed out earlier, that $\tau\in T_0 \mapsto J(\tau)$ is a conformal map of $T_0$ onto the closed lower
 halfplane.

4. The {\em Schwarzian derivative} of a meromorphic function $f(z)$ depending on a complex variable $z$ is defined as 
\begin{align*}
\{f,z\}&\coloneqq \frac{d^2}{dz^2}\log f'(z)-\frac 12 \biggl(\frac{d}{dz}\log f'(z)\biggr)^2\\
&=  \frac{2f'(z)f'''(z)-3f''(z)^2}{2f'(z)^2}.
\end{align*}
It is well known (see \cite[p.~253]{Bie}) that the ratio $f=w_1/w_2 $ of two fundamental solutions of the hypergeometric differential equation has a Schwarzian derivative given by
$$ \{f, z\} =\frac{1-\la^2}{2z^2} + \frac{1-\mu^2}{2(1-z)^2}+ 
\frac{1-\lambda^2-\mu^2+\nu^2}{2z(1-z)},
$$ where
$$ \la=1-\ga,\quad \mu=\ga-\alpha-\beta,\quad \nu=\alpha-\beta.$$ 

For our function $p=H_1/H_2$ we have $\la=2/3$, $\mu=1/2$, $\nu=0$, and so 
$$\{p,J\}=\frac{5}{18J^2} + \frac{3}{8(1-J)^2}+ 
\frac{11}{72J(1-J)}. $$

Similarly,
$$\{\tau,J\}= \frac{4}{9J^2} + \frac{3}{8(1-J)^2}+ 
\frac{23}{72J(1-J)}.
$$ 

 The Schwarzian derivative of a function $f$ is invariant under post-com\-po\-si\-tion with a  M\"obius transformations $S$, namely,
$$ \{S\circ f, z\} =\{f, z\}.$$ 
Moreover, we have the following chain rule for the Schwarzian
derivative $$\{f\circ g, z\}=\{f,g\}\biggl(\frac{dg}{dz}\biggr)^2+\{g,z\}.$$

If $S\in \PSL_2(\Z)$ is arbitrary, then by $\PSL_2(\Z)$-equivariance of our function $p$ we have $S\circ p=p\circ  S$. 
It follows that 
$$f(\tau) \coloneqq \{p,\tau\} =\{S\circ p, \tau\} =\{p\circ S,\tau\}= f(S(\tau))S'(\tau)^2. $$
This implies that $f$ has the same transformation behavior as a modular form  of weight $4$.  
 
 This can be seen more explicitly as follows:
\begin{align*}
\{p,\tau\} &= \{p, J\} J'(\tau)^2+\{J,\tau\} = (\{p, J\} -\{\tau, J\})J'(\tau)^2 \\
&= -\frac{J'^2}{6}\biggl(\frac{1}{J^2}  +
\frac{1}{J(1-J)}\biggr)=\frac{J'^2}{6J^2(J-1)}\\
&=-8\pi^2\Delta^{1/3}/J^{2/3}=-1152\pi^2\Delta/E_4^2.
\end{align*} 
This last expression for $\{p,\tau\}$ was also recorded 
in \cite[Proposition 6.2]{SS12}.  Since $\Delta$ has weight $12$ and $E_4$ has weight $4$, it clearly  transforms as a modular form of weight $4$. Note that  $\{p,\tau\}$ is  not a modular form
according to our definition as this function  has poles, namely exactly at the points equivalent to $\rho$ under $\PSL_2(\Z)$.  
These points are precisely the critical points of $p$
as we know from Corollary~\ref{cor:critp}.

\end{document}